\theoremstyle{plain}
\newtheorem{theorem}{Theorem}[section]
\newtheorem*{thm*}{Theorem}
\newtheorem{prop}[theorem]{Proposition}
\newtheorem{lemma}[theorem]{Lemma}
\newtheorem{cor}[theorem]{Corollary}
\theoremstyle{definition}
\theoremstyle{remark} % i.e. same as plain, but theorembodyfont is rm
\newtheorem{remark}[theorem]{Remark}
\theoremstyle{plain}
\numberwithin{equation}{section}
\newcommand{\alpheqn}[1][\relax]{
     \refstepcounter{equation}
     \if#1\relax \relax
       \else \label{#1}
     \fi  
     \setcounter{saveeqn}{\value{equation}}%
    \setcounter{equation}{0}%
    \renewcommand{\theequation}{\thealphequation}}
\newcommand{\reseteqn}{\setcounter{equation}{\value{saveeqn}}%
     \renewcommand{\theequation}{\thearabicequation}}
\providecommand{\mathscr}{\mathcal} % a priori mathscr is mathcal
\newcommand{\vertiii}[1]{{\left\vert\kern-0.25ex\left\vert\kern-0.25ex\left\vert #1 
    \right\vert\kern-0.25ex\right\vert\kern-0.25ex\right\vert}}
\newcommand{\Bvert}[1]{{\Big\vert\kern-0.25ex\Big\vert\kern-0.25ex\Big\vert #1 
    \Big\vert\kern-0.25ex\Big\vert\kern-0.25ex\Big\vert}}
\newcommand{\bvert}[1]{{\big\vert\kern-0.25ex\big\vert\kern-0.25ex\big\vert #1 
    \big\vert\kern-0.25ex\big\vert\kern-0.25ex\big\vert}}
\newcommand{\nvert}[1]{{\vert\kern-0.25ex\vert\kern-0.25ex\vert #1 
    \vert\kern-0.25ex\vert\kern-0.25ex\vert}}
\renewcommand{\leq}{\leqslant}
\renewcommand{\geq}{\geqslant}
\newcommand{\cd}{\cdot}
\newcommand{\clc}{\cdot\ldots\cdot}
\newcommand{\ot}{\otimes}
\newcommand{\olo}{\otimes\ldots\otimes}
\newcommand{\ci}{\circ}
\newcommand{\ti}{\times}
\newcommand{\nn}{\mathbb{N}}
\newcommand{\zz}{\mathbb{Z}}
\newcommand{\cc}{\mathbb{C}}
\newcommand{\al}{\alpha}
\newcommand{\de}{\delta}
\newcommand{\De}{\Delta}
\newcommand{\io}{\iota}
\newcommand{\la}{\lambda}
\newcommand{\Om}{\Omega}
\newcommand{\si}{\sigma}
\newcommand{\te}{\theta}
\newcommand{\ov}{\overline}
\newcommand{\C}[1]{\mathcal{#1}}
\newcommand{\T}[1]{\textup{#1}}
\newcommand{\B}[1]{\mathbb{#1}}
\newcommand{\fork}[2]{\left\{ \begin{array}{#1} #2 \end{array} \right.}
\newcommand{\su}{\subseteq}
\newcommand{\q}{\qquad}
\newcommand{\qq}{\qquad \qquad}
\newcommand{\wit}{\widetilde}
\newcommand{\wih}{\widehat}
\newcommand{\inn}[1]{\langle #1 \rangle}
\newcommand{\binn}[1]{\big\langle #1 \big\rangle}
\newcommand{\sem}{\setminus}
\begin{document}

\subjclass[2020]{46L67} %Primary: 46L05; Secondary: 46L89, 47L65,  46L05 }
%46L67=Quantum groups (operator algebraic aspects)
%58B32=Geometry of quantum groups
%58B34= NCG ala Connes
 %46L89= Other ÒnoncommutativeÓ mathematics based on C?-algebra theory
%46L30 =States of selfadjoint operator algebras
%81R15 taken out
%81R60= Noncommutative geometry in quantum theory

\keywords{Compact quantum groups, Haar state, Quantum spheres, Modular automorphism.} %Quantum metric spaces, crossed products, dynamics of Riemannian manifolds}

\title[The Haar state on the Vaksman-Soibelman quantum spheres]{The Haar state on the Vaksman-Soibelman quantum spheres}

\author{Max Holst Mikkelsen}
\address{Department of Mathematics and Computer Science,
The University of Southern Denmark,
Campusvej 55, DK-5230 Odense M,
Denmark}
\email{maxmi@imada.sdu.dk}

\author{Jens Kaad}
\address{Department of Mathematics and Computer Science,
The University of Southern Denmark,
Campusvej 55, DK-5230 Odense M,
Denmark}
\email{kaad@imada.sdu.dk}

\begin{abstract}%{{{
In this note we present explicit formulae for the Haar state on the Vaksman-Soibelman quantum spheres. Our formulae correct various statements appearing in the literature and our proof is straightforward relying simply on properties of the modular automorphism group for the Haar state.
\end{abstract}%}}}

\maketitle
\tableofcontents

\section{Introduction}
One of the main tools in the analysis of compact quantum groups is the Haar state which is the correct quantum analogue of the Haar measure for compact Hausdorff groups. The existence of the Haar state on compact quantum groups was established by Woronowicz in \cite{Wor:CMP,Wor:CQG} (assuming the existence of a faithful state) and in the general case by Van Daele, \cite{VDa:HMC}. In applications, it is of course highly relevant to have explicit formulae for the Haar state instead of merely knowing the existence of such a functional. 

We are in this text interested in the Vaksman-Soibelman quantum spheres which arise as homogeneous spaces for the quantized version of the special unitary group. These higher quantum spheres were introduced and studied by Vaksman and Soibelman in \cite{VaSo:AQS} and these authors also gave an explicit formula for the Haar state (on polynomial expressions in the generators).

Unfortunately, there are some misprints both in the original formula for the Haar state from \cite{VaSo:AQS} and in the updated formula from \cite{SoVa:PTQ}. We therefore found it relevant to finally present a correct version of this formula. Moreover, we give a different and more elementary proof relying only on some basic computations coming from properties of the modular automorphism of the Haar state. This is in line with the original computations due to Woronowicz for the case of quantum $SU(2)$, \cite[Appendix 1]{Wor:CMP}. 

The above mentioned misprints have been corrected by Sheu in the formula appearing in \cite[Theorem 10]{She:CQG}, but in Sheu's work no argument is given for the validity of these corrections.

We end this paper by applying our main theorem to show that the Haar states, for different values of the deformation parameter $q \in (0,1]$, form a continuous field of faithful states on the Vaksman-Soibelman quantum spheres. This can also be derived from the more general work of Nagy in \cite{Nag:HQG,Nag:DQP}, but our proof is, at least in our opinion, much more straightforward.

\subsubsection*{Acknowledgements}
The authors gratefully acknowledge the financial support from  the Independent Research Fund Denmark through grant no.~9040-00107B, 7014-00145B and 1026-00371B.

We would also like to thank David Kyed for many nice discussions on the content of the present paper and to thank Francesco D'Andrea, Piotr Hajac and Tomasz Maszczyk for urging us to present more details in the proof of Proposition \ref{p:diagonal} and Proposition \ref{p:injective}.

\section{Preliminaries}
We start out by reviewing the basic theory relating to the quantized special unitary group and the corresponding Vaksman-Soibelman spheres. We pay particular attention to the arguments leading to an explicit formula for the modular automorphism of the Haar state. Indeed, these arguments are a bit difficult to extract from the present literature on the subject.

\subsection{Quantum $SU(N)$}
We fix $\ell\in \mathbb{N}$ and $q\in (0,1]$. Moreover, we put $N := \ell + 1$.

  Let $\C O( M_q(N))$ denote the universal unital algebra over $\cc$ with $N^2$ generators $u_{ij}$, indexed by $i,j \in \{1,2,\ldots,N\}$, subject to the relations
  \[
  \begin{split}
    & u_{ik} u_{jk} = q u_{jk} u_{ik} \q u_{ki} u_{kj} = q u_{kj} u_{ki} \qq i < j \\
    & u_{il} u_{jk} = u_{jk} u_{il} \qq i < j \, , \, \, k < l \\
    & u_{ik} u_{jl} - u_{jl} u_{ik} = (q - q^{-1}) u_{jk} u_{il} \qq i < j \, , \, \, k < l .
  \end{split}
  \]

  For each $n \in \nn$ we let $S_n$ denote the group of permutations of the set $\{1,2,\ldots,n\}$. For a permutation $\si \in S_n$ we recall that the inversion number $\io(\si)$ denotes the number of inversions in the permutation $\si$.

  For each pair of non-empty subsets $I,J \su \{1,2,\ldots,N\}$ of the same cardinality $|I| = |J| = n$ we may choose
  $1 \leq i_1 < i_2 < \ldots < i_n \leq N$ and $1 \leq j_1 < j_2 < \ldots < j_n \leq N$ such that $I = \{i_1,i_2,\ldots,i_n\}$ and $J = \{j_1,j_2,\ldots,j_n\}$. The \emph{quantum $n$-minor determinant} with respect to the row $I$ and column $J$ is defined by
  \[
D_{IJ} := \sum_{\si \in S_n} (-q)^{\io(\si)} u_{i_{\si(1)} j_1} u_{i_{\si(2)} j_2} \clc u_{i_{\si(n)} j_n} \in \C O(M_q(N)) .
  \]

  In the case where $I = J = \{1,2,\ldots,N\}$ we put $D_{IJ} := D_q$ and refer to $D_q$ as the quantum determinant. Moreover, for $i,j \in \{1,2,\ldots,N\}$ we apply the notation $A_{ij} \in \C O(M_q(N))$ for the quantum $\ell$-minor determinant with respect to the row $\{1,2,\ldots,N\} \sem \{i\}$ and column $\{1,2,\ldots,N\} \sem \{j\}$.

  We let $\C O(SL_q(N))$ denote the quotient of $\C O(M_q(N))$ by the two-sided ideal generated by $D_q - 1 \in \C O(M_q(N))$. The following result can be found in \cite[Section 9.2.3, Proposition 10]{KlSc:QGR}:

  \begin{prop}\label{p:hopf}
    There is a unique Hopf algebra structure on the unital $\cc$-algebra $\C O(SL_q(N))$ with comultiplication $\De\colon  \C O(SL_q(N)) \to \C O(SL_q(N)) \ot \C O(SL_q(N))$ and counit $\epsilon \colon \C O(SL_q(N)) \to \cc$ determined by
    \[
\De(u_{ij}) = \sum_{k = 1}^N u_{ik} \ot u_{kj} \q \mbox{ and } \q \epsilon(u_{ij}) = \de_{ij},
\]
where $\de_{ij} \in \{0,1\}$ denotes the Kronecker delta. The antipode $S \colon \C O(SL_q(N)) \to \C O(SL_q(N))$ is given by the formula
\[
S(u_{ij}) = (-q)^{i-j} A_{ji}. 
\]
Moreover, it holds that $S^2(u_{ij}) = q^{2(i - j)} u_{ij}$.
  \end{prop}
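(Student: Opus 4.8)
The natural route is to treat $\C O(M_q(N))$ as the FRT bialgebra attached to the standard $R$-matrix of the vector corepresentation and then obtain $\C O(SL_q(N))$ as a determinant quotient. First I would check that $\De$ and $\epsilon$, defined on generators by $\De(u_{ij}) = \sum_k u_{ik}\ot u_{kj}$ and $\epsilon(u_{ij}) = \delta_{ij}$, extend to unital algebra homomorphisms on $\C O(M_q(N))$. For $\epsilon$ this is immediate, since the identity matrix satisfies the four families of quadratic relations. For $\De$ one invokes the standard observation that if $(u'_{ij})$ and $(u''_{ij})$ are two matrices whose entries each satisfy the relations and commute entrywise across the two matrices, then the entries of the product $\big(\sum_k u'_{ik}u''_{kj}\big)$ satisfy them as well --- a finite verification, handled one relation family at a time. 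Coassociativity and counitality are then inherited from associativity of matrix multiplication, so $\C O(M_q(N))$ is a bialgebra.

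Next one records that the quantum determinant $D_q$ is central and grouplike: $\De(D_q) = D_q\ot D_q$ (from the quantum Cauchy--Binet expansion $\De(D_{IJ}) = \sum_K D_{IK}\ot D_{KJ}$, in which for $I = J = \{1,\dots,N\}$ the index $K$ is forced to equal $\{1,\dots,N\}$) and $\epsilon(D_q) = 1$. Hence $\De(D_q - 1) = (D_q - 1)\ot D_q + 1\ot(D_q - 1)$ lies in $(D_q - 1)\ot\C O(M_q(N)) + \C O(M_q(N))\ot(D_q - 1)$ and $\epsilon(D_q - 1) = 0$, so the two-sided ideal generated by $D_q - 1$ is a bi-ideal and $\C O(SL_q(N))$ inherits a bialgebra structure. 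It is the unique such structure for which the images of the $u_{ij}$ satisfy the displayed formulae, since those elements generate the quotient.

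For the antipode I would use the quantum Laplace expansions of $D_q$ along a row and along a column, which in $\C O(M_q(N))$ take the form
\[
\sum_{k} u_{ik}\,(-q)^{k-j} A_{jk} \;=\; \delta_{ij}\, D_q \;=\; \sum_{k} (-q)^{i-k} A_{ki}\, u_{kj}.
\]
Modulo $D_q - 1$ this says precisely that the matrix $V := \big((-q)^{i-j}A_{ji}\big)_{i,j}$ is a two-sided inverse of $(u_{ij})$ over $\C O(SL_q(N))$. By the general principle that a bialgebra whose matrix of generators admits a two-sided inverse is a Hopf algebra --- with antipode given on generators by the entries of that inverse, and automatically an algebra anti-homomorphism, the only real point being that this $S$ respects the defining relations --- one obtains the Hopf algebra structure together with $S(u_{ij}) = (u^{-1})_{ij} = (-q)^{i-j}A_{ji}$.

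Finally $S^2(u_{ij}) = q^{2(i-j)}u_{ij}$ is obtained by direct computation. Since $S$ is an anti-homomorphism, $S^2(u_{ij}) = (-q)^{i-j}S(A_{ji})$, and $S(A_{ji})$ is evaluated by substituting $S(u_{kl}) = (-q)^{k-l}A_{lk}$ into the quantum $\ell$-minor $A_{ji}$, reordering the factors by means of the quadratic relations --- the step that produces the power of $q$ --- and identifying the resulting alternating sum of minors of the cofactor matrix with $u_{ij}$ via a quantum Jacobi (complementary-minor) identity together with $D_q = 1$. A slicker alternative is to use that $\C O(SL_q(N))$ is coquasitriangular for the $R$-matrix, so that $S^2(u_{ij}) = \sum_{k,l} f_{ik}\,u_{kl}\,(f^{-1})_{lj}$ for an explicit scalar diagonal matrix $f$ built from the $R$-matrix, which yields the same scaling. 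The real obstacle throughout is purely computational --- the stability of the four relation families under the matrix product, the quantum Laplace expansions for the cofactors $A_{ij}$, and the complementary-minor identity behind the $S^2$ formula --- all conceptually routine but requiring careful bookkeeping of $q$-exponents and inversion numbers, which is why this is normally just quoted, as here from \cite{KlSc:QGR}.
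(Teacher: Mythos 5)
The paper offers no proof of this proposition, simply quoting it from \cite[Section 9.2.3, Proposition 10]{KlSc:QGR}, and your outline is precisely the standard argument given there: the FRT/bialgebra structure on $\mathcal{O}(M_q(N))$, the grouplike central quantum determinant making the quotient a bialgebra, the quantum Laplace expansions producing the two-sided inverse of $(u_{ij})$ and hence the antipode, and the complementary-minor (or coquasitriangularity) computation for $S^2$. The sketch is correct, with the deferred computational verifications accurately identified as the only remaining work.
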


  The Hopf algebra $\C O(SL_q(N))$ becomes a Hopf $*$-algebra when equipped with the involution determined by $u_{ij}^* := S(u_{ji}) =  (-q)^{j-i} A_{ij}$. We denote this Hopf $*$-algebra by $\C O(SU_q(N))$ and refer to it as the \emph{coordinate algebra} of the \emph{quantum group} $SU_q(N)$.

For a Hilbert space $H$, we denote the unital $C^*$-algebra of bounded operators on $H$ by $\B B(H)$. We let $C(SU_q(N))$ denote the universal unital $C^*$-algebra associated with $\C O(SU_q(N))$ and refer to $C(SU_q(N)) $ as \textit{quantum} $SU(N)$. Thus, on $\C O(SU_q(N))$ we first consider the seminorm
  \[
\| x \| := \sup\big\{ \| \pi(x) \| \mid \pi \colon \C O(SU_q(N)) \to \B B(H) \text{ is a unital $*$-homomorphism} \big\} .
\]
Letting $\C I$ denote the kernel of the seminorm $\| \cd \|$ we record that $\C I$ is a $*$-ideal inside $\C O(SU_q(N))$ and that the above formula defines a norm on the quotient space $\C O(SU_q(N))/ \C I$. The corresponding completion agrees with $C(SU_q(N))$. When equipped with the coproduct induced by the coproduct on $\C O(SU_q(N))$, we get that $C(SU_q(N))$ is a compact quantum group in the sense of Woronowicz, see \cite{Wor:CQG} and \cite[Proposition 1.1.4]{NeTu:CQG} for a short proof. 

We shall see later on in this text that the induced unital $*$-homomorphism $j\colon \C O(SU_q(N)) \to C(SU_q(N))$ is in fact injective.

%Record that $\mathcal{O}(SU_q(N)) $ admits a Hopf $\ast$-algebra automorphism given by 
%\begin{align}\label{eq:aut}
%u_{ij}\mapsto q^{j-i} u_{ji}.
%\end{align}

\subsection{The Haar state}
We immediately record the following fundamental result which is a consequence of \cite[Theorem 1.3]{Wor:CQG}:

\begin{theorem}
  There exists a unique state $h\colon C(SU_q(N)) \to \cc$ satisfying that
  \[
(\T{id} \ot h)\De(x) = h(x) \cd 1= (h \ot \T{id}) \De(x)
\]
for all elements $x \in C(SU_q(N))$. 
\end{theorem}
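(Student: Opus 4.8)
The plan is to obtain this as a special case of Woronowicz's general theorem on the existence and uniqueness of the Haar state on a compact quantum group, \cite[Theorem 1.3]{Wor:CQG}; it applies here because $C(SU_q(N))$ was just recorded to be a compact quantum group in the sense of Woronowicz. Concretely, I would construct the Haar state in two stages: first a left- and right-invariant state $h_0$ on the Hopf $*$-algebra $\C O(SU_q(N))$, and then its continuous extension to $C(SU_q(N))$; and I would establish uniqueness by the standard short convolution argument.

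For existence, recall that $\C O(SU_q(N))$ is cosemisimple, being a $q$-analogue of the coordinate algebra of a compact Lie group, so it is the algebraic direct sum of the linear spans of matrix coefficients of its pairwise inequivalent irreducible unitary corepresentations. Define $h_0 \colon \C O(SU_q(N)) \to \cc$ by $h_0(1) = 1$ and $h_0 = 0$ on every summand other than the span $\cc \cd 1$ of the trivial corepresentation. Since $\De$ carries the matrix coefficient space of a corepresentation into its own tensor square, a direct computation on matrix coefficients gives $(\T{id} \ot h_0)\De(x) = h_0(x) \cd 1 = (h_0 \ot \T{id})\De(x)$ for all $x \in \C O(SU_q(N))$. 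That $h_0$ is moreover a \emph{state} — positive with $h_0(1) = 1$ — follows from Woronowicz's orthogonality relations, which for each irreducible corepresentation produce a positive invertible intertwiner in terms of which $h_0(a a^*)$ is expressed as a normalised weighted trace, hence nonnegative. Granting that $h_0$ is a state, Cauchy--Schwarz together with $h_0(1)=1$ yields $|h_0(x)|^2 \leq h_0(x^* x) \leq \|x\|^2$ for the universal $C^*$-norm on $\C O(SU_q(N))$ (the GNS representation of $h_0$ completes to a Hilbert space representation, hence is bounded by the universal norm), so $h_0$ extends by continuity to a state $h$ on $C(SU_q(N))$; the invariance identities pass to the closure since $j\big(\C O(SU_q(N))\big)$ is dense and $\De$ is continuous.

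For uniqueness, suppose $h'$ is a state on $C(SU_q(N))$ satisfying $(h' \ot \T{id})\De(x) = h'(x) \cd 1 = (\T{id} \ot h')\De(x)$ for all $x$. Applying the left-invariant functional $h$ to the identity $(h' \ot \T{id})\De(x) = h'(x) \cd 1$ gives
\[
h'(x) = (h' \ot h)\De(x) = h'\big((\T{id} \ot h)\De(x)\big) = h'\big(h(x) \cd 1\big) = h(x),
\]
so $h' = h$. The only genuinely non-trivial ingredient in this scheme is the positivity of the algebraic Haar functional $h_0$: it is precisely there that cosemisimplicity of $\C O(SU_q(N))$ and Woronowicz's orthogonality relations for matrix coefficients — and hence the full compact quantum group structure — are used; everything else is a formal manipulation of the invariance axioms. (Alternatively, one could bypass the algebra entirely and run Van Daele's Ces\`aro averaging of an arbitrary state with respect to convolution, \cite{VDa:HMC}, but the route above fits better with the computations carried out later in this paper.)
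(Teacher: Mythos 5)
Your proposal is correct in outline, and its first sentence is in fact the entirety of the paper's argument: the theorem is stated there as an immediate consequence of \cite[Theorem 1.3]{Wor:CQG}, applicable because $C(SU_q(N))$ has just been recorded to be a compact quantum group, and no further proof is given. Where you diverge is in actually sketching a proof of that black box, and the route you choose is genuinely different from Woronowicz's (and Van Daele's) C*-level convolution/averaging construction: you build the algebraic Haar functional $h_0$ on the cosemisimple Hopf $*$-algebra $\C O(SU_q(N))$ from the Peter--Weyl decomposition, prove positivity, and extend by continuity via Cauchy--Schwarz and the universal norm (the boundedness of the GNS representation being guaranteed because the generators are entries of a unitary matrix). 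Your uniqueness argument via $(h' \ot h)\De(x)$ is the standard one and is fine. The one point you should treat with more care is the positivity of $h_0$: you attribute it to ``Woronowicz's orthogonality relations,'' but in Woronowicz's own development those relations --- in particular the positivity of the intertwiner $F_v$ implementing $v \cong v^{cc}$ --- are \emph{derived from} the positivity of the already-constructed Haar state, which is exactly the logical order the paper follows later (it uses positivity of $h$ to show $\eta(v^*v)$ is invertible and hence that every corepresentation is unitarizable). To avoid circularity you must instead invoke the purely algebraic CQG-algebra theory, where orthogonality relations and positivity of the Haar functional are established from cosemisimplicity and unitarizability without presupposing a C*-level state; this is \cite[Section 11.3.2]{KlSc:QGR}, whose Proposition 29 the paper itself quotes (for faithfulness of $\eta$). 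With that reference supplied, your two-stage construction is a legitimate and arguably more self-contained alternative to the paper's bare citation; what the paper's route buys is brevity and the ability to use positivity of $h$ as an input to the representation theory rather than an output of it.
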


We refer to the state $h\colon C(SU_q(N)) \to \cc$ as the \emph{Haar state} on quantum $SU(N)$. For the remainder of this subsection we shall explain how the Haar state can be applied to prove the injectivity of the unital $*$-homomorphism $j \colon \C O(SU_q(N)) \to C(SU_q(N))$. To this end, it does of course suffice to show that the composition $h \ci j \colon \C O(SU_q(N)) \to \cc$ is faithful (meaning that $h(j(x^* x)) > 0$ for all $x \neq 0$). This latter fact can be established by applying some of the representation theory for the coordinate algebra $\C O(SU_q(N))$. We review some of the main steps here below. To ease the notation we put $G := SU_q(N)$.

  Recall that a \emph{matrix corepresentation} of $\C O(G)$ is given by a matrix $v \in M_n\big(\C O(G)\big)$ for some $n \in \nn$ such that the relations
  \[
\De(v_{ij}) = \sum_{k = 1}^n v_{ik} \ot v_{kj} \q \T{and} \q \epsilon(v_{ij}) = \de_{ij} 
\]
hold for all $i,j \in \{1,2,\ldots,n\}$. Since $\C O(G)$ is a Hopf algebra, it follows that the matrix corepresentation $v$ is automatically an invertible element in the unital algebra $M_n\big(\C O(G)\big)$.

A matrix corepresentation $v \in M_n\big( \C O(G) \big)$ is called \emph{unitary} when $S(v_{ij}) = v_{ji}^*$ for all $i,j \in \{1,2,\ldots,n\}$. Since $\C O(G)$ is a Hopf $*$-algebra, the unitarity of the matrix corepresentation $v$ is equivalent to $v$ being a unitary element in the unital $*$-algebra $M_n\big( \C O(G) \big)$.

Letting $e_1,e_2,\ldots,e_n$ denote the standard basis vectors for $\cc^n$, a matrix corepresentation $v \in M_n\big( \C O(G) \big)$ induces a linear map
\[
\varphi \colon \cc^n \to \cc^n \ot \C O(G) \q \varphi(e_j) := \sum_{k = 1}^n e_k \ot v_{kj} .
\]
We say that $v$ is \emph{irreducible} when it is impossible to find a proper subspace $U \su \cc^n$ such that $\varphi(U) \su U \ot \C O(G)$ (here proper means $U \neq \{0\}$ and $U \neq \cc^n$). We remark in passing that $\varphi$ is a corepresentation of $\C O(G)$ in the sense that
\[
(\varphi \ot \T{id})\varphi = (\T{id} \ot \De)\varphi \q \T{and} \q (\T{id} \ot \epsilon) \varphi = \T{id} .
\]
%The unit $1 \in \C O(G)$ is a unitary irreducible matrix corepresentation and the same holds for the matrix $u \in M_N( \C O(G))$ with entries $u_{ij}$, $i,j \in \{1,2,\ldots,N\}$. We refer to $u$ as the \emph{fundamental matrix corepresentation}. 

Two irreducible matrix corepresentations $v$ and $w$ in $M_n\big( \C O(G) \big)$ are said to be \emph{equivalent} when there exists an invertible matrix $T \in M_n(\cc)$ such that $vT = Tw$. We let $\widehat{\C O(G)}$ denote the set of equivalence classes of irreducible unitary matrix corepresentations. The unit $1 \in \C O(G)$ is an irreducible unitary matrix corepresentation and the corresponding equivalence class is denoted by ${\bf 1} \in \widehat{\C O(G)}$. Moreover, letting $u \in M_N( \C O(G))$ denote the matrix with entries $u_{ij}$, $i,j \in \{1,2,\ldots,N\}$, we see that $u$ is also an irreducible unitary matrix corepresentation. Indeed, irreducibility follows since the matrix entries span an $N^2$-dimensional vector subspace of $\C O(G)$. We refer to the matrix $u$ as the \emph{fundamental matrix corepresentation}.

For each equivalence class $\al \in \widehat{\C O(G)}$ we choose a representative $ u^\al \in M_{n_\al}\big(\C O(G)\big)$. The following \emph{Peter-Weyl decomposition} stated here below is a consequence of \cite[Section 11.1.4, Corollary 10]{KlSc:QGR} and \cite[Section 11.1.5, Proposition 12]{KlSc:QGR}. In the present setting, it hinges on the observation that the matrix entries of the fundamental matrix corepresentation $u \in M_N( \C O(G))$ generate $\C O(G)$ as an algebra over $\cc$. 

%Notice that this result decomposition result is valid in the much more general context of cosemisimple Hopf algebras.

\begin{theorem}\label{t:petweyl}
The set of matrix entries $\big\{  u^{\al}_{ij} \mid \al \in \widehat{\C O(G)} \, , \, \, i,j \in \{1,2,\ldots,n_\al \} \big\}$ forms a vector space basis for $\C O(G)$.
\end{theorem}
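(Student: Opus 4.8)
The plan is to deduce the statement from the general Peter--Weyl theory for compact quantum groups, specialised to the present situation via the observation that the entries of the fundamental matrix corepresentation $u$ generate $\C O(G)$ as an algebra. I would first record the two structural inputs from \cite[Section 11.1.4, Corollary 10]{KlSc:QGR} and \cite[Section 11.1.5, Proposition 12]{KlSc:QGR}: the Hopf $*$-algebra $\C O(G)$ attached to a compact quantum group is cosemisimple, so that every finite-dimensional corepresentation decomposes as a direct sum of irreducible unitary corepresentations; and the linear span of all matrix coefficients decomposes as a coalgebra into a direct sum $\bigoplus_{\al} C_\al$ indexed by $\widehat{\C O(G)}$, where $C_\al := \spn\big\{ u^\al_{ij} \mid i,j \in \{1,\ldots,n_\al\} \big\}$ is a simple subcoalgebra of dimension exactly $n_\al^2$ admitting the matrix coefficients $u^\al_{ij}$ as a linear basis. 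Both facts ultimately rest on the Schur orthogonality relations for the Haar state $h$, and this is precisely where the existence of $h$ enters.

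Granting these inputs, the only point specific to $SU_q(N)$ that remains is to show that $V := \sum_{\al \in \widehat{\C O(G)}} C_\al$ is all of $\C O(G)$; equivalently, that every element of $\C O(G)$ is a matrix coefficient of some finite-dimensional corepresentation. First I would observe that $V$ is a unital subalgebra of $\C O(G)$: a product $v_{ij} w_{kl}$ of matrix coefficients of corepresentations $v$ and $w$ is a matrix coefficient of the tensor product corepresentation, whose entries are exactly the products $v_{ij} w_{kl}$, and by cosemisimplicity this tensor product is again a direct sum of irreducible unitary corepresentations, so its matrix coefficients lie in $V$; moreover $1 \in \C O(G)$ is the matrix coefficient of the trivial corepresentation. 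Second, $V$ contains every generator $u_{ij}$, since the fundamental matrix $u \in M_N(\C O(G))$ is itself an irreducible unitary matrix corepresentation, as recalled above. Since Proposition \ref{p:hopf} together with the definition of the involution on $\C O(SU_q(N))$ expresses $u_{ij}^* = (-q)^{j-i} A_{ij}$ as a polynomial in the generators $u_{kl}$, the matrix entries $u_{ij}$ already generate $\C O(G)$ as a unital $*$-algebra; hence $V = \C O(G)$.

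Combining the two steps yields $\C O(G) = \bigoplus_{\al \in \widehat{\C O(G)}} C_\al$ as a vector space, and since $\{ u^\al_{ij} \}_{i,j}$ is a basis of $C_\al$ for each $\al$, the full family $\big\{ u^\al_{ij} \mid \al \in \widehat{\C O(G)},\ i,j \in \{1,\ldots,n_\al\} \big\}$ is a vector space basis of $\C O(G)$, which is the assertion.

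The main obstacle is not the generation step, which is elementary, but the two pieces of structure theory quoted at the outset: the complete reducibility of finite-dimensional corepresentations, and the fact that the coefficient space of an irreducible $n$-dimensional corepresentation has the full dimension $n^2$, so that inequivalent irreducibles contribute linearly independent, ``orthogonal'' blocks. I would take these from \cite{KlSc:QGR}; alternatively they can be re-derived directly from the Haar state via the orthogonality relations, in the spirit of Woronowicz's original treatment for quantum $SU(2)$.
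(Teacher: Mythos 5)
Your proposal is correct and follows essentially the same route as the paper, which likewise derives the statement from \cite[Section 11.1.4, Corollary 10]{KlSc:QGR} and \cite[Section 11.1.5, Proposition 12]{KlSc:QGR} together with the observation that the matrix entries of the fundamental corepresentation $u$ generate $\C O(G)$ as an algebra over $\cc$. You merely spell out in more detail the step the paper leaves implicit, namely that the span of all matrix coefficients is a unital subalgebra containing the generators $u_{ij}$ (and that the involution adds nothing new since $u_{ij}^* = (-q)^{j-i}A_{ij}$), hence equals $\C O(G)$.
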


The Haar functional $\eta \colon \C O(G) \to \cc$ is defined by putting
\[
\eta(1) := 1 \q \T{and} \q \eta( u^{\al}_{ij}) := 0 \, \, \T{for all } \al \in \widehat{\C O(G)} \sem \{{\bf 1}\} \, , \, \, i,j \in \{1,2,\ldots,n_\al\}.
\]
By construction, the Haar functional becomes \emph{bi-invariant} in the sense that
\begin{equation}\label{eq:biinvar}
(\T{id} \ot \eta)\De(x) = \eta(x) \cd 1 = (\eta \ot \T{id})\De(x) \q \T{for all } x \in \C O(G) .
\end{equation}
The Haar functional is in fact \emph{uniquely determined} by the condition $\eta(1) = 1$ and \emph{one of} the identities in \eqref{eq:biinvar}. In particular, we obtain that $\eta = h \ci j$. For any matrix corepresentation $v \in M_n\big(\C O(G) \big)$ we may thus conclude that
\[
\eta(v^* v) = h\big( j(v)^* j(v) \big) > 0,
\]
where the various operations are applied entrywise. The above strict inequality relies on the positivity of the Haar state $h \colon C(SU_q(N)) \to \cc$ and entails that $\eta(v^* v)$ is an invertible complex matrix. Using this observation, the argument in \cite[Theorem 5.2]{Wor:CMP} shows that $v$ is equivalent to a unitary matrix corepresentation.

We now quote the following result, which is a special case of \cite[Section 11.3.2, Proposition 29]{KlSc:QGR}. 

\begin{theorem}
  The Haar functional $\eta \colon \C O(SU_q(N)) \to \cc$ satisfies that $\eta(x^* x) > 0$ for all $x \neq 0$. %is faithful. extends to a state $h \colon C(SU_q(N)) \to \cc$ which is faithful on $\C O(SU_q(N))$ in the sense that
%  \[
%h(x^* x) = 0 \rar x = 0 \q \mbox{for all } x \in \C O(SU_q(N)) .
%  \]
\end{theorem}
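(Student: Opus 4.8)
The plan is to deduce the faithfulness of $\eta$ from the Peter--Weyl theorem (Theorem~\ref{t:petweyl}) together with the Schur orthogonality relations for the matrix entries of the irreducible unitary matrix corepresentations of $\C O(G)$; the only genuinely analytic input is the fact already recorded above, namely that $\eta(v^* v)$ is a positive \emph{definite} matrix for every matrix corepresentation $v$ of $\C O(G)$. Since every matrix corepresentation is equivalent to a unitary one, I would assume that each chosen representative $u^\al \in M_{n_\al}\big(\C O(G)\big)$, $\al \in \widehat{\C O(G)}$, is unitary, so that $S(u^\al_{ij}) = (u^\al_{ji})^*$ for all $i,j$. By Theorem~\ref{t:petweyl} every $x \in \C O(G)$ then has a unique expansion as a finite sum $x = \sum_{\al} \sum_{i,j} c^\al_{ij}\, u^\al_{ij}$ with $c^\al_{ij} \in \cc$.

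The main step is to establish the orthogonality relations: for every $\al \in \widehat{\C O(G)}$ there should be a positive definite matrix $F_\al \in M_{n_\al}(\cc)$ such that $\eta\big( u^\al_{ij} (u^\be_{kl})^* \big)$ vanishes unless $\al = \be$ and $i = k$, in which case it equals $(F_\al)_{jl}/\T{Tr}(F_\al)$ (the precise index placement, and whether $F_\al$ or its inverse appears, depends on conventions and will not matter below). The vanishing for $\al \neq \be$ together with the Kronecker--delta structure comes from the classical averaging argument: given matrix corepresentations $v$ and $w$ and a linear map $T$ between the underlying vector spaces, I would check — using only the bi-invariance \eqref{eq:biinvar} of $\eta$, the defining identities of a corepresentation, and the automatic invertibility of matrix corepresentations — that applying $\eta$ to a suitable expression assembled from $v$, $T$, and the antipode of $w$ yields an intertwiner from $w$ to $v$. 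By Schur's lemma such an intertwiner vanishes when $v$ and $w$ are inequivalent irreducible and is a scalar multiple of the identity when $v = w$; specializing to $v = u^\al$, $w = u^\be$ and rewriting the antipode entries as adjoints via unitarity gives the asserted shape of $\eta\big( u^\al_{ij} (u^\be_{kl})^* \big)$, with $F_\al$ \emph{a priori} merely invertible. To upgrade this to positive-definiteness I would apply the recorded inequality $\eta(v^* v) > 0$ to the conjugate corepresentation $\ov{u^\al}$, i.e.\ the matrix corepresentation with entries $(u^\al_{ij})^*$: a short computation with the orthogonality relation identifies the positive definite matrix $\eta\big( (\ov{u^\al})^* \ov{u^\al} \big)$ with a positive scalar multiple of $F_\al$ (after the harmless rescaling that fixes $\T{Tr}(F_\al) > 0$), so $F_\al > 0$.

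Granting the orthogonality relations, faithfulness follows quickly. Writing $y := x^*$ it suffices to show $\eta(y y^*) > 0$ whenever $y \neq 0$, since $y y^* = x^* x$ and $y \mapsto y^*$ is a bijection of $\C O(G)$. Expanding $y = \sum_\al \sum_{i,j} c^\al_{ij}\, u^\al_{ij}$, using that distinct classes are orthogonal, and carrying out the sum forced by the Kronecker delta gives
\[
\eta(y y^*) \;=\; \sum_\al \frac{1}{\T{Tr}(F_\al)} \sum_i \sum_{j,l} c^\al_{ij}\, \ov{c^\al_{il}}\, (F_\al)_{jl}.
\]
For each $\al$ and each fixed $i$ the inner sum is a positive definite Hermitian form evaluated on the vector $(c^\al_{ij})_j \in \cc^{n_\al}$ — positive-definiteness being precisely the fact that $F_\al > 0$ — so the whole expression is nonnegative and vanishes if and only if every coefficient $c^\al_{ij}$ is zero, i.e.\ $x = 0$. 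Hence $\eta$ is faithful, and, as observed before the statement, the unital $*$-homomorphism $j \colon \C O(SU_q(N)) \to C(SU_q(N))$ is injective.

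The hard part will be the derivation of the Schur orthogonality relations — setting up the averaging operator with the correct placement of the antipode and invoking Schur's lemma for corepresentations in the right form. Everything downstream of that (the positive-definiteness of the matrices $F_\al$ and the concluding computation) is formal, relying only on what has already been collected in this section: the Peter--Weyl basis, the identity $\eta = h \ci j$, and the positivity of the state $h$.
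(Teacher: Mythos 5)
Your proposal is sound, but note that the paper does not actually prove this theorem: it is quoted verbatim as a special case of \cite[Section 11.3.2, Proposition 29]{KlSc:QGR}, so there is no in-paper argument to match yours against. What you have written is essentially the standard proof of that cited result: expand in the Peter--Weyl basis of Theorem \ref{t:petweyl}, apply the Schur orthogonality relations to collapse $\eta(y y^*)$ to a block-diagonal sum of Hermitian forms governed by the matrices $F_\al$, and conclude from positive-definiteness of each $F_\al$. Two remarks. First, the only step you leave genuinely unproved is the orthogonality relations themselves; your sketch of the averaging-plus-Schur's-lemma derivation is the correct one, and the statement you need is exactly \cite[Section 11.2.2, Proposition 15]{KlSc:QGR}, so nothing in your plan would fail --- it is simply the same external input the paper is implicitly relying on, pushed one level down. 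Second, your detour through the conjugate corepresentation $\ov{u^\al}$ to establish $F_\al > 0$ works (the scale ambiguity you flag is genuinely harmless, since only $F_\al / \T{Tr}(F_\al)$ enters the orthogonality relations and the sign is fixed by positive-definiteness of $\eta\big( (\ov{u^\al})^* \ov{u^\al} \big)$), but it can be bypassed entirely: Proposition \ref{p:modular} already records that $f_1(v)$ is positive and invertible for every irreducible unitary matrix corepresentation $v$, and $F_\al = f_1(u^\al)$. Your concluding quadratic-form computation is correct, and the index-placement caveat is indeed immaterial, since a positive matrix remains positive under transposition and entrywise conjugation.
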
 

Using the above theorem and the identity $\eta = h \ci j$ we may conclude that $j \colon \C O(SU_q(N)) \to C(SU_q(N))$ is injective. An application of Proposition \ref{p:hopf} now shows that $\big(C(SU_q(N)),u\big)$ is a compact matrix pseudogroup in the sense of \cite{Wor:CMP}. The unital $*$-subalgebra of $C(SU_q(N))$ generated by the matrix entries $u_{ij}$ agrees with $\C O(SU_q(N))$.

\subsection{The modular automorphism of the Haar state}
We remind the reader that $q \in (0,1]$ and that $G = SU_q(N)$. In this subsection we review the modular properties of the Haar functional $\eta = h \ci j \colon \C O(G) \to \cc$.

For a matrix corepresentation $v \in M_n\big(\C O(G)\big)$ we recall that the \emph{contragredient} matrix corepresentation $v^c \in M_n\big(\C O(G)\big)$ is defined by $(v^c)_{ij} := S(v_{ji})$ for all $i,j \in \{1,2,\ldots,n\}$. In particular, we have the \emph{double contragredient} matrix corepresentation $v^{cc} \in M_n\big( \C O(G) \big)$ with entries $(v^{cc})_{ij} = S^2(v_{ij})$.

For each $z \in \cc$ we define the unital algebra homomorphism
\[
f_z \colon \C O(G) \to \cc \q f_z(u_{ij}) := \de_{ij} \cd q^{z\cd(2j - N-1)}  \, \, , \, \, \, i,j \in \{1,2,\ldots,N\} .
\]
 These algebra homomorphisms are compatible with the Hopf algebra structure in the sense that
  \[
(f_w \ot f_z)\De = f_{w + z} \, \, , \, \, \, f_0 = \epsilon \, \, \T{ and } \, \, \,  f_z S = f_{-z}
\]
for all $z,w \in \cc$. It moreover holds that $f_z(x^*) = \ov{ f_{-\ov{z}}(x)}$ and that the family $\{f_z\}_{z \in \cc}$ is \emph{analytic} in the sense that the map $z \mapsto f_z(x)$ is entire whenever $x \in \C O(G)$. 

Each of the algebra homomorphisms $f_z \colon \C O(G) \to \cc$ induces a pair of algebra automorphisms
\[
\si_L^{-iz} \T{ and } \si_R^{-iz} \colon \C O(G) \to \C O(G) 
\]
defined by the formulae $\si_L^{-iz}(x) := (\T{id} \ot f_z)\De(x)$ and $\si_R^{-iz}(x) := (f_z \ot \T{id})\De(x)$ for all $x \in \C O(G)$. The relationship between these algebra automorphisms and the square of the antipode can then be expressed as follows:
  \[
S^2(x) = (\si_R^{-i} \si_L^{i})(x) \q \T{for all } x \in \C O(G) .
\]
For any matrix corepresentation $v \in M_n\big( \C O(G) \big)$, we therefore see that the double contragredient $v^{cc}$ is equivalent with $v$. Indeed, we get that
\[
S^2(v) = (\si_R^{-i} \si_L^{i})(v) = f_1(v) \cd v \cd f_{-1}(v), 
\]
where the different operations are applied entry by entry. This in turn implies that $f_1(v) \cd v = v^{cc} \cd f_1(v)$.

The modular properties of the Haar functional $\eta\colon \C O(G) \to \cc$ can now be stated and verified. Define the algebra automorphism
\[
\te := \si_R^{-i} \si_L^{-i} \colon \C O(G) \to \C O(G) .
\]
In the statement here below we let $\T{Tr} \colon M_n(\cc) \to \cc$ denote the operator trace with $\T{Tr}(1_n) = n$.

\begin{prop}\label{p:modular}
  The algebra automorphism $\te \colon \C O\big(SU_q(N)\big) \to \C O\big( SU_q(N)\big)$ is the modular automorphism for the Haar functional in the sense that
  \[
\eta(x \cd y) = \eta\big(y \cd \te(x) \big) \q \mbox{for all } x,y \in \C O\big(SU_q(N)\big).
\]
Moreover, for every irreducible unitary matrix corepresentation $v \in M_n\big(\C O(G)\big)$ we have that $f_1(v)$ is positive and invertible and that $\T{Tr}\big( f_1(v) \big) = \T{Tr}\big( f_{-1}(v) \big)$.
\end{prop}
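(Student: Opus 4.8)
The plan is to establish the modular identity first, and then treat the positivity, invertibility, and trace statements about $f_1(v)$ as consequences of the standard structure theory for the Haar functional on a CQG (specialized to the present setting).

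\medskip

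\noindent\textbf{The modular identity.} First I would verify the claimed identity $\eta(xy) = \eta(y\,\te(x))$ by reducing to matrix corepresentations. By Theorem \ref{t:petweyl} it suffices to check the identity when $x = u^{\al}_{ij}$ and $y = u^{\be}_{kl}$ range over Peter--Weyl matrix entries. The key input is the orthogonality relations for matrix entries with respect to $\eta$: for irreducible unitary $v = u^{\al}$, $w = u^{\be}$ one has $\eta\big((v^*)_{ij} w_{kl}\big) = 0$ unless $\al = \be$, and in that case $\eta\big((v^*)_{ij} v_{kl}\big)$ is expressed through the entries of $f_1(v)^{-1}$ (and $\eta\big(v_{ij}(v^*)_{kl}\big)$ through $f_{-1}(v)$); these are exactly \cite[Section 11.1.5, Proposition 12]{KlSc:QGR}, which is already cited in the excerpt. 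Now the automorphism $\te = \si_R^{-i}\si_L^{-i}$ acts on a matrix corepresentation $v$ entrywise by $\te(v) = f_{-1}(v)\cdot v \cdot f_{-1}(v)$ — this follows directly from the definitions $\si_L^{-i}(v) = f_1(v)\,v$, wait, I should be careful: $\si_L^{-iz}(x) = (\id\ot f_z)\De(x)$ so $\si_L^{-i}(x) = (\id\ot f_{-1})\De(x)$, hence $\si_L^{-i}(v) = v\cdot f_{-1}(v)$ and $\si_R^{-i}(v) = f_{-1}(v)\cdot v$ entrywise, giving $\te(v) = f_{-1}(v)\cdot v\cdot f_{-1}(v)$. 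Plugging this into both sides of the desired identity and expanding via the orthogonality relations turns the claim into a finite matrix computation relating $f_1(v)^{-1}$ and $f_{-1}(v)$; since $f_z$ is a homomorphism and $f_z S = f_{-z}$, unitarity of $v$ gives $f_{-1}(v) = f_1(v)^{-1}$ up to the interchange of rows and columns, and the identity drops out. I expect this bookkeeping — tracking which index the trace contracts and keeping $\si_L$ versus $\si_R$ straight — to be the main obstacle; everything else is formal.

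\medskip

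\noindent\textbf{Positivity, invertibility and the trace equality.} For an irreducible unitary matrix corepresentation $v \in M_n(\C O(G))$, invertibility of $f_1(v)$ is immediate since $f_1$ is an algebra homomorphism and $f_1(v) f_{-1}(v) = f_1(v) f_{-1}(v) = (f_1\ot f_{-1})(\text{nothing}) $ — more precisely, from $(f_w\ot f_z)\De = f_{w+z}$ applied entrywise to $v$ one gets $f_1(v)\cdot f_{-1}(v) = f_0(v) = \epsilon(v) = 1_n$, so $f_1(v)^{-1} = f_{-1}(v)$. For positivity, I would use that $f_1(v)\cdot v = v^{cc}\cdot f_1(v)$ (derived in the excerpt) together with the fact that $v^{cc}$ is again unitary, which forces $f_1(v)^* f_1(v)$ to intertwine $v$ with itself; by Schur's lemma (irreducibility) $f_1(v)^* f_1(v)$ is a positive scalar, and normalizing — or invoking that $f_z(x^*) = \ov{f_{-\ov z}(x)}$ shows $f_1(v)^* = f_{-1}(v^*)$-type relations forcing self-adjointness — yields that $f_1(v)$ is positive. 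Finally, the trace equality $\T{Tr}(f_1(v)) = \T{Tr}(f_{-1}(v))$ is the statement that the ``quantum dimension'' computed from $v$ equals that computed from its contragredient $v^c$; since $f_{-1}(v) = f_1(v)^{-1}$ and $f_1(v^c)$ is related to $f_1(v)$ by transpose-inverse, the two traces coincide. This is precisely the fact that the two Woronowicz characters define the same quantum dimension on an irreducible corepresentation, and it follows formally once positivity is in hand. The only real work is assembling these standard facts in the notation of the paper; no genuinely new estimate is needed.
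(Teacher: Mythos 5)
Your overall strategy (verify the modular identity on Peter--Weyl matrix entries via orthogonality relations, then deduce the properties of $f_1(v)$) is workable in outline, but as written it contains a sign error and two genuine gaps. First, the sign: from $\si_L^{-iz}(x) = (\id \ot f_z)\De(x)$ one gets $\si_L^{-i}(x) = (\id \ot f_{1})\De(x)$ (set $z=1$, not $z=-1$), so $\te(v) = f_1(v)\cd v\cd f_1(v)$ entrywise, not $f_{-1}(v)\cd v\cd f_{-1}(v)$; the latter is $\te^{-1}$, and feeding it into the orthogonality relations produces the reversed identity $\eta(xy)=\eta(\te^{-1}(y)\cd x)$ rather than the one claimed. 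Second, your positivity argument does not close: $v^{cc}$ is \emph{not} unitary in general (indeed $v^{cc}=f_1(v)\, v\, f_1(v)^{-1}$, which is unitary only when $f_1(v)$ is scalar), and while $f_z(x^*)=\ov{f_{-\ov z}(x)}$ together with $v_{ij}^*=S(v_{ji})$ does give self-adjointness of the matrix $f_1(v)$, self-adjoint plus invertible does not yield positive; Schur's lemma pins the intertwiner down only up to a possibly negative real scalar. Third, the equality $\T{Tr}\big(f_1(v)\big)=\T{Tr}\big(f_{-1}(v)\big)$ does not ``follow formally'': for a positive invertible $F$ the condition $\T{Tr}(F)=\T{Tr}(F^{-1})$ is a genuine normalization selecting one scalar multiple of the intertwiner, and it is precisely the point that must be proved. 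There is also a latent circularity: the orthogonality relations in the form you invoke (with $f_1(v)$ appearing in them) are usually \emph{derived} from the existence of the normalized positive intertwiner, so they cannot be used to establish its properties without care.

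The paper's proof avoids all three issues by a different route. It quotes the existence of a \emph{unique} positive invertible $F_v$ with $F_v v = v^{cc}F_v$ and $\T{Tr}(F_v)=\T{Tr}(F_v^{-1})$ for each irreducible unitary $v$, together with the fact that these assemble into a unique character $g$ implementing the modular automorphism. It then only needs to check $g=f_1$ on the fundamental corepresentation $u$, where $f_1(u)$ is explicitly diagonal with entries $q^{1-N},q^{3-N},\ldots,q^{N-1}$ --- manifestly positive and invertible, with $\T{Tr}\big(f_1(u)\big)=\sum_{j=1}^N q^{N-2j+1}=\T{Tr}\big(f_{-1}(u)\big)$ by symmetry of the exponents. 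Since the $u_{ij}$ generate $\C O(G)$ and both $g$ and $f_1$ are algebra homomorphisms, $g=f_1$ everywhere, and positivity, invertibility and the trace identity for an arbitrary irreducible $v$ then come for free from the quoted uniqueness rather than from a direct argument. To repair your version you would either need to supply the missing positivity and normalization arguments directly, or follow the paper in reducing everything to the single explicit computation on $u$.
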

\begin{proof}
 For every irreducible unitary matrix corepresentation $v \in M_n\big( \C O(G) \big)$ we know from \cite[Section 11.3.2, Lemma 30]{KlSc:QGR} that there exists a unique invertible and positive operator $F_v \in M_n(\cc)$ satisfying that $F_v \cd v = v^{cc} \cd F_v$ and $\T{Tr}(F_v) = \T{Tr}(F_v^{-1})$.

  From the proof of \cite[Theorem 5.6]{Wor:CMP} we get that there exists a unique algebra homomorphism $g \colon \C O(G) \to \cc$ such that $g(v) = F_v$ whenever $v$ is an irreducible unitary matrix corepresentation. Moreover, this algebra homomorphism $g \colon \C O(G) \to \cc$ induces the modular automorphism $\nu \colon \C O(G) \to \C O(G)$ in the sense that $\nu(x) = (g \ot \T{id} \ot g)(\De \ot \T{id})\De(x)$ for all $x \in \C O(G)$. In order to prove the present proposition we therefore only need to establish that $g = f_1$. %Establishing this claim ends the proof of the proposition. 

To see that the identity $g = f_1$ holds, we recall that the matrix entries of the fundamental corepresentation unitary $u \in M_N\big(\C O(G)\big)$ generate $\C O(G)$ as an algebra over $\cc$ and that $u$ is an irreducible unitary matrix corepresentation. In order to finish our proof it therefore suffices to show that $f_1(u) = g(u)$. But this is clear since $f_1(u) \cd u = u^{cc} \cd f_1(u)$ and since $f_1(u)$ is a diagonal $N \ti N$ matrix with entries $q^{1 - N}, q^{3 - N}, \ldots, q^{N-1}$. Indeed, we get that 
\[
\T{Tr}\big( f_1(u)\big) = \sum_{j = 1}^N q^{N - 2j + 1} = \T{Tr}\big( f_{-1}(u) \big). \qedhere
\]
\end{proof}

The next lemma is an immediate consequence of the definition of the modular automorphism $\te \colon \C O(G) \to \C O(G)$.

\begin{lemma}\label{l:modaut}
For every $i,j\in \{1,2,\dots,N \}$ it holds that $\te(u_{ij}) = q^{2( i +j -N- 1)} u_{ij}$. 
\end{lemma}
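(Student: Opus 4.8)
The plan is to compute $\te(u_{ij})$ directly from the definition $\te = \si_R^{-i}\si_L^{-i}$, using the description of $\si_L^{-iz}$ and $\si_R^{-iz}$ in terms of the algebra homomorphisms $f_z$. First I would unwind $\si_L^{-i}(u_{ij}) = (\T{id}\ot f_1)\De(u_{ij})$ using $\De(u_{ij}) = \sum_{k=1}^N u_{ik}\ot u_{kj}$, so that
\[
\si_L^{-i}(u_{ij}) = \sum_{k=1}^N u_{ik}\cd f_1(u_{kj}) = \sum_{k=1}^N u_{ik}\cd \de_{kj}\cd q^{2j - N - 1} = q^{2j - N - 1}\, u_{ij},
\]
where I used $f_1(u_{kj}) = \de_{kj}\, q^{2j - N - 1}$. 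Analogously, $\si_R^{-i}(u_{ij}) = (f_1\ot \T{id})\De(u_{ij}) = \sum_{k=1}^N f_1(u_{ik})\cd u_{kj} = q^{2i - N - 1}\, u_{ij}$.

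Next I would compose the two, being careful that $\si_R^{-i}$ is an algebra homomorphism and hence scales the scalar-multiple $q^{2j-N-1}u_{ij}$ by the same factor it applies to $u_{ij}$ itself:
\[
\te(u_{ij}) = \si_R^{-i}\big(\si_L^{-i}(u_{ij})\big) = \si_R^{-i}\big(q^{2j - N - 1}\, u_{ij}\big) = q^{2j - N - 1}\cd q^{2i - N - 1}\, u_{ij} = q^{2(i + j - N - 1)}\, u_{ij},
\]
which is exactly the claimed formula. This also uses that $\si_L^{-i}$ and $\si_R^{-i}$ commute, though here that is not even needed since we only evaluate on the generators $u_{ij}$ and get a clean scalar at each stage.

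Strictly speaking there is nothing hard here: the only mild subtlety is bookkeeping the sign and normalization conventions in the exponent of $q$ appearing in the definition of $f_z$, namely $f_z(u_{ij}) = \de_{ij}\, q^{z(2j - N - 1)}$, and making sure that the two applications (one picking up the column index $j$ via $\si_L$, one picking up the row index $i$ via $\si_R$) combine additively in the exponent. I would double-check consistency against Proposition \ref{p:modular}: the diagonal entries of $f_1(u)$ are $q^{1-N}, q^{3-N}, \ldots, q^{N-1}$, which matches $q^{2j - N - 1}$ for $j = 1,\ldots,N$, so the conventions are aligned. Thus the proof is a one-line computation once the action of $\si_L^{-i}$ and $\si_R^{-i}$ on generators is recorded, and the statement follows.
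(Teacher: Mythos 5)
Your computation is correct and is precisely the argument the paper intends: the lemma is stated there as an immediate consequence of the definition $\te = \si_R^{-i}\si_L^{-i}$, and your unwinding of $\si_L^{-i}$ and $\si_R^{-i}$ on the generators via $f_1(u_{kj}) = \de_{kj}\, q^{2j-N-1}$ is exactly that one-line verification, with the exponents combining as $q^{2j-N-1}\cd q^{2i-N-1} = q^{2(i+j-N-1)}$. Nothing further is needed.
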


\subsection{The maximal torus and invariance properties of the Haar state}\label{ss:maxtorus}
Recall that $\ell \in \nn$ and that $N = \ell + 1$. We are in this section going to review how the maximal torus $\B T^\ell$ sits as a compact matrix sub-pseudogroup of $G = SU_q(N)$. This entails some important invariance properties for the Haar state $h \colon C(SU_q(N)) \to \cc$.

We let $C(\B T^\ell)$ denote the unital $C^*$-algebra of continuous functions on the $\ell$-torus. The coordinate projections, which we think of as unitary elements in $C(\B T^{\ell})$, are denoted by $w_1,w_2,\ldots,w_\ell \colon \B T^\ell \to \cc$. We apply the notation
\[
I\colon C(\B T^\ell) \to \cc \q I( w_1^{n_1} w_2^{n_2} \clc w_\ell^{n_\ell}) := \de_{0 n_1} \cd \de_{0 n_2} \clc \de_{0 n_\ell} 
\]
for the faithful state which integrates a continuous function on $\mathbb{T}^\ell $ with respect to the Haar measure on the $\ell$-torus. We let $\C O(\B T^\ell)$ denote the unital $*$-subalgebra of $C(\B T^\ell)$ generated by the coordinate projections and record that $\C O(\B T^\ell)$ becomes a Hopf $*$-algebra with coproduct, antipode and counit given by $\De(w_j) = w_j \ot w_j$, $S(w_j) = w_j^{-1}$ and $\epsilon(w_j) = 1$ for all $j \in \{1,2,\ldots,\ell\}$.

We define the surjective unital $*$-homomorphism
\[
\Phi \colon  C(SU_q(N)) \to C(\B T^\ell) \q \Phi( u_{ij} ) := \fork{ccc}{ \de_{ij} w_i & i < N \\ \de_{ij} w_1^{-1} w_2^{-1} \clc w_\ell^{-1} & i = N }
\]
and observe that $\Phi$ induces a surjective homomorphism $\Phi \colon \C O(SU_q(N)) \to \C O(\B T^\ell)$ of the underlying Hopf $*$-algebras. We may apply $\Phi$ to define the homogeneous spaces
\[
\begin{split}
C(G/\B T^\ell ) & = \big\{ x \in C(G) \mid (\T{id} \ot \Phi)\De(x) = x \ot 1 \big\} \q \T{and} \\
C(\B T^\ell \backslash G) & = \big\{ x \in C(G) \mid (\Phi \ot \T{id})\De(x) = 1 \ot x \big\} .
\end{split}
\]
These homogeneous spaces are unital $C^*$-subalgebras of $C( SU_q(N) )$.

Let us also introduce the linear maps $L$ and $R\colon C(SU_q(N)) \to C(SU_q(N))$ given by the following expressions:
\[
L := (\T{id} \ot I \Phi)\De  \q \T{and} \q R := (I \Phi \ot \T{id})\De .
\]
The next result is a consequence of the discussion in \cite[Section 2]{Nag:HQG}.

\begin{prop} \label{p:LR}
  The linear maps $L$ and $R$ are faithful conditional expectations onto the homogeneous spaces $C(G/\B T^\ell)$ and $C(\B T^\ell \backslash G)$, respectively. Moreover, it holds that $h L = h = h R$ and that $LR = RL$. 
\end{prop}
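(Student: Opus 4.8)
The plan is to verify each assertion in turn by unwinding the definitions of $L$ and $R$ and using the bi-invariance of the Haar functional together with the fact that $I\Phi$ is a state. First I would check that $L$ and $R$ land in the claimed homogeneous spaces: for $L = (\T{id} \ot I\Phi)\De$ one applies $(\T{id} \ot \Phi)\De$ to $L(x)$, uses coassociativity to move the inner $\De$ past the outer one, and then uses right-invariance of the Haar measure on $\B T^\ell$ — i.e.\ $(I \ot \T{id})\De_{\B T^\ell} = I(\cd) 1$ applied after $\Phi$ — to collapse the extra tensor leg, yielding $L(x) \ot 1$; the argument for $R$ landing in $C(\B T^\ell \backslash G)$ is symmetric, using left-invariance. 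Conversely, if $x \in C(G/\B T^\ell)$ then $(\T{id} \ot \Phi)\De(x) = x \ot 1$, so applying $\T{id} \ot I$ gives $L(x) = x$; hence $L$ restricts to the identity on $C(G/\B T^\ell)$, and similarly $R$ restricts to the identity on $C(\B T^\ell \backslash G)$. That $L$ and $R$ are completely positive and unital is immediate since they are compositions of $\ast$-homomorphisms, the state $I\Phi$, and slice maps; combined with the idempotency just shown, this makes them conditional expectations onto the stated subalgebras (invoking Tomiyama's theorem, or simply noting the module property which follows from $L$ being the identity on its range).

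Next I would establish $hL = h = hR$. Applying $h$ to $L(x) = (\T{id} \ot I\Phi)\De(x)$ and using that $h$ is a state (so it commutes with the slice map $I\Phi$ in the sense $h(\T{id} \ot I\Phi)\De = (h \ot I\Phi)\De$), we get $h(L(x)) = (h \ot I\Phi)\De(x)$. Now $(h \ot \T{id})\De(x) = h(x) \cd 1$ by the defining property of the Haar state, so $(h \ot I\Phi)\De(x) = I\Phi\big(h(x)\cd 1\big) = h(x)$, using $I\Phi(1) = 1$. The identity $hR = h$ is proved the same way with the left-invariance $(\T{id} \ot h)\De(x) = h(x) \cd 1$.

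Finally, for the commutation $LR = RL$ I would write out $LR(x) = (\T{id} \ot I\Phi)\De\big((I\Phi \ot \T{id})\De(x)\big)$ and $RL(x) = (I\Phi \ot \T{id})\De\big((\T{id} \ot I\Phi)\De(x)\big)$, then use coassociativity to rewrite both as a single expression of the form $(I\Phi \ot \T{id} \ot I\Phi)(\De \ot \T{id})\De(x) = (I\Phi \ot \T{id} \ot I\Phi)(\T{id} \ot \De)\De(x)$; the two orders of composing the slice maps on the outer legs clearly agree, so $LR = RL$. The main obstacle — or rather the only point requiring genuine care — is justifying that the Haar state $h$ and the slice maps interact correctly on the $C^*$-algebraic level (as opposed to the purely algebraic Hopf-$\ast$-algebra level), i.e.\ that $\De$ maps $C(G)$ into the minimal tensor product and that slicing by the state $I\Phi$ is well defined and continuous there; this is exactly what the cited discussion in \cite[Section 2]{Nag:HQG} provides, so I would appeal to it for the analytic underpinnings and keep the rest of the argument as the elementary diagram-chasing above.
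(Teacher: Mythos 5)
The paper does not actually prove this proposition --- it is quoted as ``a consequence of the discussion in \cite[Section 2]{Nag:HQG}'' --- so your attempt is being measured against the standard argument rather than against anything in the text. Most of what you write is correct and is indeed the standard diagram chase: the coassociativity computation showing $L(x)\in C(G/\B T^\ell)$ and that $L$ restricts to the identity there, the Tomiyama argument upgrading a u.c.p.\ idempotent to a conditional expectation, the derivation of $hL=h=hR$ from bi-invariance of $h$ and $I\Phi(1)=1$, and the identification $LR=RL=(I\Phi\ot\T{id}\ot I\Phi)(\De\ot\T{id})\De$ are all fine.

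The genuine gap is that you never address \emph{faithfulness} of $L$ and $R$, which is part of the statement and is not cosmetic: the faithfulness of $E=LR|_{C(S_q^{2\ell+1})}$ is exactly what is used later in Lemma \ref{l:imE}, in Proposition \ref{p:injective}, and in the corollary that $h$ is faithful on $C(S_q^{2\ell+1})$. Note that you cannot get faithfulness of $L$ from $hL=h$ here, since faithfulness of $h$ on the $C^*$-algebra $C(G)$ is itself a deep fact (Nagy's theorem) that the paper does not assume. The standard repair is to exhibit $L$ as an average over a compact group action: for $t\in\B T^\ell$ set $\be_t:=(\T{id}\ot\ev_t\Phi)\De$, a $*$-automorphism of $C(G)$ with $\be_s\be_t=\be_{st}$ and $\be_e=\T{id}$, so that $L(x)=\int_{\B T^\ell}\be_t(x)\,dt$. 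If $L(x^*x)=0$ then for every state $\om$ on $C(G)$ the continuous nonnegative function $t\mapsto\om\big(\be_t(x)^*\be_t(x)\big)$ integrates to zero, hence vanishes identically; evaluating at $t=e$ gives $\om(x^*x)=0$ for all $\om$, i.e.\ $x=0$. The same argument applied to $(\ev_t\Phi\ot\T{id})\De$ gives faithfulness of $R$. With this paragraph added, your proof is complete and self-contained, which is arguably an improvement on the paper's bare citation.
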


\subsection{The Vaksman-Soibelman quantum spheres}
We are now going to recall the definition of the Vaksman-Soibelman quantum sphere $C(S_q^{2\ell + 1})$. Our exposition pays particular attention to a commutative $C^*$-subalgebra $D_q^\ell$ which agrees with a quantized version of the standard $\ell$-simplex. The invariance properties of the Haar state under the left and right actions of the maximal torus $\B T^\ell$ described in the previous section, entail that we only need to compute the Haar state on the commutative $C^*$-subalgebra $D_q^\ell$.

For each $j \in \{1,2,\ldots,N\}$ we put $z_j := u_{Nj} \in \C O(SU_q(N))$ and let $\mathcal{O}(S_q^{2\ell+1})$ denote the unital $*$-subalgebra of $\C O(SU_q(N))$ generated by the elements $z_j$. It follows from the defining relations for $\C O(SU_q(N))$ that the generators $z_1,z_2,\ldots,z_N$ satisfy the relations
\begin{equation}\label{eq:quasph}
\begin{split}
z_i z_j &=q z_j z_i  \, \, , \, \, \, i < j \q 
z_i^* z_j = q z_j z_i^* \, \, , \, \, \,  i \neq j\\
[z_{j+1}^*, z_{j+1}] & = (1-q^2)\sum_{i=1}^{j} z_iz_i^* \, \, , \, \, \, j\in \{1,2,\dots,\ell\}   \\
[z_1^*, z_1]& = 0 \q \sum_{i=1}^N z_i z_i^* =1 .
\end{split}
\end{equation}
We refer to $\mathcal{O}(S_q^{2\ell+1})$ as the \textit{coordinate algebra} of the quantum $(2\ell+1)$-sphere and the norm-closure of this coordinate algebra inside $C(SU_q(N))$ shall be denoted by $C(S_q^{2\ell+1})$. We refer to the unital $C^*$-algebra $C(S_q^{2\ell + 1})$ as the \emph{(Vaksman-Soibelman) quantum $(2\ell + 1)$-sphere}. Finally, we apply the notation $D_q^\ell$ for the unital $C^*$-subalgebra of $C(S_q^{2\ell + 1})$ generated by $\{z_j z_j^*\}_{j = 1}^\ell$ and refer to $D_q^\ell$ as the \emph{quantized $\ell$-simplex.} We immediately notice that $D_q^\ell$ is commutative.

Before proving the next lemma, it is convenient to record that $ \mathcal{O}(S_q^{2\ell+1})$ agrees with the linear span of elements of the form $  z_1^{n_1} z_2^{n_2} \clc z_N^{n_N} (z_1^*)^{m_1} (z_2^*)^{m_2} \clc (z_N^*)^{m_N}$ where $n_i,m_i\in \mathbb{N}_0$ for all $i\in \{1,2,\dots,N\} $. 

\begin{lemma} \label{l:imE}
  The composition $LR\colon  C(SU_q(N)) \to C(SU_q(N))$ restricts to a faithful conditional expectation $E\colon  C(S_q^{2\ell + 1}) \to D_q^\ell$. 
\end{lemma}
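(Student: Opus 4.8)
The statement has two parts: that $LR$ maps $C(S_q^{2\ell+1})$ into $D_q^\ell$, and that the restriction $E$ is a faithful conditional expectation. The faithfulness and the conditional expectation property (idempotence, $D_q^\ell$-bimodularity, contractivity, positivity) will follow almost for free once the range is identified: $LR$ is a faithful conditional expectation on all of $C(SU_q(N))$ by Proposition \ref{p:LR} (faithfulness because $L$ and $R$ are faithful and commute, so $LR = L R$ is a composition of faithful positive maps; positivity and the norm-one property are inherited), and the restriction of a faithful conditional expectation to a $C^*$-subalgebra $A$ whose image lands inside $A$ is again a faithful conditional expectation. So the crux is purely the computation of the range.

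\textbf{Identifying the range.} First I would show $LR\big(C(S_q^{2\ell+1})\big) \subseteq D_q^\ell$. By continuity and linearity it suffices to evaluate $LR$ on the spanning monomials $m = z_1^{n_1}\cdots z_N^{n_N}(z_1^*)^{m_1}\cdots(z_N^*)^{m_N}$ recorded just before the lemma. Now I compute how $\Phi$ acts: since $z_j = u_{Nj}$, we have $\Phi(z_j) = \Phi(u_{Nj}) = \delta_{Nj}\, w_1^{-1}w_2^{-1}\cdots w_\ell^{-1}$ — wait, that is wrong: $\Phi(u_{Nj}) = \de_{Nj} w_1^{-1}\cdots w_\ell^{-1}$ only gives the last row, so actually $\Phi(z_j)=0$ for $j<N$ and $\Phi(z_N)=w_1^{-1}\cdots w_\ell^{-1}$. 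Hence $\Phi$ kills $C(S_q^{2\ell+1})$ except on the subalgebra generated by $z_N, z_N^*$. This is too crude for a direct computation; instead I use that $\Delta(z_j) = \Delta(u_{Nj}) = \sum_{k=1}^N u_{Nk}\otimes u_{kj} = \sum_{k=1}^N z_k \otimes u_{kj}$. Applying $L = (\mathrm{id}\otimes I\Phi)\Delta$ and using $I\Phi(u_{kj}) = \delta_{kj}\, I(\Phi(u_{jj}))$, and $I(w_i)=0$, $I(1)=1$, one finds $I\Phi(u_{jj}) = \delta_{jN}$ for... no — $I(\Phi(u_{jj})) = I(w_j) = 0$ for $j<N$ and $I(w_1^{-1}\cdots w_\ell^{-1}) = 0$ as well, so $L(z_j) = 0$. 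That cannot be right either since $L$ fixes $1$; the point is $L$ and $R$ do not fix the $z_j$ individually but act diagonally on the $\mathbb{T}^\ell$-grading. The clean approach: $\mathcal{O}(S_q^{2\ell+1})$ carries a $\mathbb{Z}^\ell$-grading coming from the left $\mathbb{T}^\ell$-coaction $(\Phi\otimes\mathrm{id})\Delta$ and another from the right coaction $(\mathrm{id}\otimes\Phi)\Delta$; $R$ is the projection onto the left-degree-zero part and $L$ onto the right-degree-zero part, and $E = LR$ is the projection onto the bi-invariant part. So I would first compute the bidegree of the monomial $z_1^{n_1}\cdots(z_N^*)^{m_N}$, show that it is bi-invariant iff $n_i = m_i$ for all $i$, and then show that the linear span of the bi-invariant monomials, intersected with $C(S_q^{2\ell+1})$, generates (and is contained in) $D_q^\ell$. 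The inclusion into $D_q^\ell$ needs the commutation relations \eqref{eq:quasph} to rewrite a bi-invariant monomial $z_1^{n_1}(z_1^*)^{n_1}\cdots$ (after reordering) as a polynomial in the commuting elements $z_jz_j^*$; the sphere relation $\sum z_iz_i^* = 1$ handles the $j = N$ term, so $z_Nz_N^* \in D_q^\ell$ too.

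\textbf{Steps in order.} (1) Record that $L$ and $R$ are the conditional expectations onto the fixed-point algebras of the right and left $\mathbb{T}^\ell$-coactions, using $I\Phi$ averaging; deduce $E := LR$ is a faithful conditional expectation on $C(SU_q(N))$ onto $C(G/\mathbb{T}^\ell)\cap C(\mathbb{T}^\ell\backslash G)$. (2) Compute, via $\Delta(z_j)=\sum_k z_k\otimes u_{kj}$ together with $\Phi(u_{kj})=\delta_{kj}\chi_j$ where $\chi_j = w_j$ for $j<N$ and $\chi_N = (w_1\cdots w_\ell)^{-1}$, that on a monomial the map $R$ multiplies by $I$ of the corresponding product of $\chi$'s in the first leg and similarly $L$ in the last leg; conclude $E$ annihilates a monomial unless the total exponents of each $w_i$ (from left and from right separately) vanish, i.e. $n_i=m_i$ for $i=1,\dots,\ell$ and automatically for $i=N$ by the total-degree-zero constraint $\sum n_i = \sum m_i$. (3) Show such a surviving monomial, reordered using \eqref{eq:quasph}, lies in the unital algebra generated by $\{z_jz_j^*\}_{j=1}^\ell$ — using $z_Nz_N^* = 1 - \sum_{i=1}^\ell z_iz_i^* \in D_q^\ell$ — hence $E(C(S_q^{2\ell+1}))\subseteq D_q^\ell$. (4) Conversely each generator $z_jz_j^*$ of $D_q^\ell$ is bi-invariant, so $E$ fixes $D_q^\ell$ pointwise; combined with (3) and the general fact that a faithful conditional expectation restricted to a subalgebra it maps into itself is still a faithful conditional expectation, conclude that $E\colon C(S_q^{2\ell+1})\to D_q^\ell$ is a faithful conditional expectation.

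\textbf{Main obstacle.} The genuinely nontrivial step is (3): showing that every bi-invariant monomial in the $z_j$'s and $z_j^*$'s actually lies in the \emph{commutative} algebra $D_q^\ell$, not merely in some larger bi-invariant subalgebra. One has to reorder $z_1^{n_1}\cdots z_N^{n_N}(z_1^*)^{m_1}\cdots(z_N^*)^{m_N}$ with $n_i=m_i$ using the $q$-commutation and the commutators in \eqref{eq:quasph}, and the commutators $[z_{j+1}^*,z_{j+1}] = (1-q^2)\sum_{i\le j} z_iz_i^*$ produce lower-order terms whose bidegree is still zero, so an induction on total degree (or on $\sum n_i$) is needed, with the inductive hypothesis that all lower-degree bi-invariant elements are in $D_q^\ell$. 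Care is required that the rewriting stays inside $\mathcal{O}(S_q^{2\ell+1})$ and that the base case and the $j=N$ contribution (via $\sum z_iz_i^*=1$) are handled; once the algebraic identity is in hand at the level of $\mathcal{O}(S_q^{2\ell+1})$, density and continuity of $E$ promote it to $C(S_q^{2\ell+1})$.
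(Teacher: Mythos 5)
Your proposal is correct and follows essentially the same route as the paper: the paper's proof likewise evaluates $R$ and $L$ on the spanning monomials $z_1^{n_1}\cdots z_N^{n_N}(z_1^*)^{m_1}\cdots(z_N^*)^{m_N}$, obtains $LR(x)=\de_{n_1,m_1}\cdots\de_{n_N,m_N}\cd x$, and concludes membership in $D_q^\ell$, with the conditional-expectation and faithfulness properties inherited from Proposition \ref{p:LR}. The only difference is that you explicitly flag and sketch the reordering argument showing the surviving monomials lie in $D_q^\ell$, a step the paper asserts without detail.
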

\begin{proof}
%Let us show that the image of $E$ does indeed agree with $D_q^\ell$. It clearly holds that $D_q^\ell \su \T{Im}(E)$ since all elements in $D_q^\ell$ are fixed by the actions of $\al$ and $\be$ (implying that $E(x) = x$ for all $x \in D_q^\ell$). 
Let $x = z_1^{n_1} z_2^{n_2} \clc z_N^{n_N} (z_1^*)^{m_1} (z_2^*)^{m_2} \clc (z_N^*)^{m_N}\in \mathcal{O}(S_q^{2\ell+1})$. It can be verified that
  \[
  \begin{split}
  R(x) & = \de_{\sum_{j = 1}^N n_j, \sum_{j = 1}^N m_j} \cd x \, \, \T{ and } \\
  L(x) & = \de_{n_1 - m_1,n_N - m_N} \cd \de_{n_2 - m_2, n_N - m_N} \clc \de_{n_\ell - m_\ell, n_N - m_N} \cd x .
  \end{split}
\]
We thereby obtain the identity $LR(x) = \de_{n_1,m_1} \cd \de_{n_2,m_2} \clc \de_{n_N,m_N} \cd x\in D_q^\ell $.
\end{proof}
%
%For the rest of this subsection, we focus on the case where $q \in (0,1)$.

For a countable non-empty index set $I$, we let $\ell^2(I)$ denote the Hilbert space of $\ell^2$-sequences indexed by $I$. The standard basis for $\ell^2(I)$ is denoted by $\{ e_i \}_{i \in I}$. 

In the case where $q \in (0,1)$, we let $W$ and $D\colon \ell^2(\nn_0) \to \ell^2(\nn_0)$ denote the weighted shift operator and the diagonal operator given by
\[
W(e_n):= \sqrt{1 - q^{2(n+1)}} e_{n + 1} \, \, \T{ and } \, \, \, D(e_n) := q^n e_n .
\]
We also consider the unitary operator $U\colon \ell^2(\zz) \to \ell^2(\zz)$ defined by $U(e_m) := e_{m + 1}$.

%For each $j \in \{1,2,\ldots,\ell\}$ we put
%\[
%S_j := 1^{\ot (j-1)} \ot S \ot 1^{\ot (\ell - j)} 
%\]

 \begin{prop}\label{p:repsphere}
Suppose that $q \in (0,1)$. There exists a unital $*$-homomorphism $\pi_N\colon C\big(SU_q(N)\big) \to \B B\big(\ell^2(\nn_0)^{\ot \ell} \ot \ell^2(\zz)\big)$ satisfying that
  \[
  \pi_N(z_j) = \fork{ccc}{ D^{\ot (N - j)} \ot W \ot 1^{\ot (j -2)} \ot 1 & \T{for} & j > 1 \\
    D^{\ot \ell} \ot U & \T{for} & j = 1 } .
  \]
 \end{prop}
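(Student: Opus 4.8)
The plan is to verify directly that the proposed assignment $z_j \mapsto \pi_N(z_j)$ extends to a unital $*$-homomorphism by checking that the operators
\[
Z_j := \begin{cases} D^{\ot(N-j)} \ot W \ot 1^{\ot(j-2)} & j > 1 \\ D^{\ot \ell} \ot U & j = 1 \end{cases}
\]
acting on $\ell^2(\nn_0)^{\ot \ell} \ot \ell^2(\zz)$ satisfy the defining relations \eqref{eq:quasph} of $\C O(S_q^{2\ell+1})$. Since $\C O(S_q^{2\ell+1})$ is a unital $*$-subalgebra of $\C O(SU_q(N))$ and since $\C O(SU_q(N))/\C I$ (with $\C I$ the kernel of the universal seminorm) sits densely in $C(SU_q(N))$, it is enough to produce a $*$-representation of the abstract unital $*$-algebra with generators $z_1,\ldots,z_N$ and relations \eqref{eq:quasph} on this Hilbert space: any such representation factors through $C(S_q^{2\ell+1})$ by universality, and composing with the (injective, by the earlier discussion) inclusion $\C O(SU_q(N)) \hookrightarrow C(SU_q(N))$ identifies the image. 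Concretely, I would not even need the full algebra $C(SU_q(N))$ as target --- but the statement asks for $\pi_N$ on $C(SU_q(N))$, so after verifying the sphere relations I would invoke that $z_1,\ldots,z_N$ generate $\C O(S_q^{2\ell+1})$ and extend by the universal property of the $C^*$-completion, noting the operators $Z_j$ are bounded (each is a tensor product of a contraction $D$ or $W$ with a unitary).

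**Key computational steps.** The verification of \eqref{eq:quasph} splits into a handful of local checks. First, on a single factor $\ell^2(\nn_0)$ one has the standard quantum-$SU(2)$ type identities: $DW = qWD$ and $W^*W - WW^* = (1-q^2)D^2$, together with $[D,D^*] = 0$ and $W^*W + q^2 \cdot(\text{something})$; more precisely $WW^* = 1 - D^2$ composed appropriately. I would record these once. Then:
\begin{itemize}
\end{itemize}
(I avoid a literal bullet list --- what I mean is the following sequence.) For the commutation relations $z_iz_j = qz_jz_i$ with $i<j$: when $i,j$ both exceed $1$, the factor in slot $N-i$ is $D$ (for $z_j$, since $j>i$ means $N-j < N-i$... let me re-index carefully during write-up) versus $W$ (for $z_i$), and the relation reduces to $DW = qWD$ on that slot with identities everywhere else; the case $i=1$ uses $DU = qUD$... but wait, $D$ and $U$ act on different tensor legs, so actually that case needs the structure in the $\ell^2(\nn_0)$ legs. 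I'd organize this by tracking, for each pair $(i,j)$, exactly which tensor slot produces the $q$ and confirming all others commute. For the relation $z_i^* z_j = q z_j z_i^*$ with $i \neq j$: similar slot-by-slot analysis, now using $D W^* = q^{-1} W^* D$ or its adjoint. For the key recursive relation $[z_{j+1}^*, z_{j+1}] = (1-q^2)\sum_{i=1}^j z_i z_i^*$: the left side is supported (non-trivially) in slot $N - (j+1)$ where it equals $(1-q^2)D^2$ tensored with identities in higher slots and $D^2$'s accumulated in lower slots, while the right side is a telescoping sum $\sum_{i=1}^j z_i z_i^*$ whose terms, using $WW^* = 1 - D^2$ on the appropriate slot and the relation $\sum D^2$-type collapse, add up to exactly that --- this is the heart of the matter. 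For $[z_1^*,z_1]=0$: immediate since $U$ is unitary. For the sphere equation $\sum_{i=1}^N z_i z_i^* = 1$: another telescoping computation, $\sum_i Z_i Z_i^* = D^{\ot\ell}\ot 1 \cdot(\ldots) + \sum_{j\geq 2}(\ldots)$, which collapses to the identity using $WW^* = 1-D^2$ repeatedly, analogous to the standard Vaksman--Soibelman fact that these operators define the sphere.

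**Main obstacle.** The computations are all elementary manipulations of weighted shifts, but the bookkeeping is genuinely the hard part: one must be scrupulous about the indexing convention in $\pi_N(z_j) = D^{\ot(N-j)}\ot W \ot 1^{\ot(j-2)}\ot 1$ --- how many $D$'s, in which slots, and how the $W$ in slot $N-j$ of $Z_j$ interacts with the $D$ in the same slot of $Z_{j'}$ for $j' < j$. I expect the telescoping in the relation $[z_{j+1}^*,z_{j+1}] = (1-q^2)\sum_{i=1}^j z_iz_i^*$ and in $\sum_i z_iz_i^* = 1$ to be where the argument really has content, because there one must see a sum of operators supported on different tensor legs collapse to a single clean expression; everything else is a one-slot identity dressed up with spectator tensor factors. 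In the write-up I would state the single-factor identities for $W, D, U$ as a preliminary display, then dispatch the pairwise relations quickly and spend the bulk of the proof on the two ``global'' relations. I'd also remark that for $q=1$ the statement is vacuous (the proposition is only claimed for $q \in (0,1)$), so no separate treatment of that case is needed here.
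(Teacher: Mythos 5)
Your proposal has a genuine gap: it proves a different (and weaker) statement than the one asserted. The proposition claims a unital $*$-homomorphism defined on all of $C(SU_q(N))$, an algebra generated by the $N^2$ elements $u_{ij}$, whereas your construction only ever touches the $N$ elements $z_j = u_{Nj}$. Verifying that the operators $Z_j$ satisfy the sphere relations \eqref{eq:quasph} produces, by universality, a $*$-homomorphism \emph{out of} the universal $C^*$-algebra $C^*(z_1,\ldots,z_N)$ on those relations --- but there is no ``universal property'' that extends this to $C(SU_q(N))$, since the $z_j$ do not generate that algebra. Worse, even the restricted claim (that $\pi_N$ is well defined on the concrete subalgebra $C(S_q^{2\ell+1}) \subseteq C(SU_q(N))$) does not follow from your argument: $C(S_q^{2\ell+1})$ is a priori a proper quotient of $C^*(z_1,\ldots,z_N)$, and a representation of the universal algebra need not descend to a quotient. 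Your phrase ``any such representation factors through $C(S_q^{2\ell+1})$ by universality'' has the direction of the factoring backwards. The paper only identifies $C(S_q^{2\ell+1})$ with the universal $C^*$-algebra in the Remark \emph{after} Proposition \ref{p:injective}, and that identification itself relies on having $\pi_N$ defined on $C(SU_q(N))$ first --- so invoking it here would be circular.

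The paper's proof avoids all of this by constructing $\pi_N$ globally: it defines block homomorphisms $\Phi_n \colon C(SU_q(N)) \to C(SU_q(2))$, composes with the standard representations $\rho$ and $\pi_2$ of $C(SU_q(2))$ on $\ell^2(\nn_0)$ and $\ell^2(\nn_0)\otimes\ell^2(\zz)$, and tensors these along the $(\ell-1)$-fold iterated coproduct $\De^{(\ell-1)}$. The only computation left is then to evaluate $\pi_N(u_{Nj})$ and check it equals the displayed operator; no verification of the relations \eqref{eq:quasph} is needed at all, because $\pi_N$ is manifestly a $*$-homomorphism on $C(SU_q(N))$ by construction. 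Your single-slot identities ($DW = qWD$, $W^*W = 1 - q^2D^2$, $WW^* = 1 - D^2$) are correct and the telescoping you describe does work out, so your computations would be a fine consistency check --- but to repair the proof you would either need to adopt the coproduct construction, or restate the proposition as a claim about the universal $C^*$-algebra only and then rework the logical order of the subsequent results.
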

\begin{proof}
  For every $n \in \{1,2,\ldots,\ell\}$ we define the unital $*$-homomorphism $\Phi_n \colon  C(SU_q(N)) \to C(SU_q(2))$ by
  \[
  \Phi_n(u_{ij}) := \fork{cc}{ u_{i - n + 1, j - n+1} & i,j \in \{n,n+1\} \\
  \de_{ij} &   \T{otherwise}}  .
  \]
  Notice in this respect that $u_{22}^* = u_{11}$ and $u_{21}^* = -q^{-1} u_{12}$ inside $C(SU_q(2))$. Moreover, we have the unital $*$-homomorphisms
  \[
\rho \colon C(SU_q(2)) \to \B B( \ell^2(\nn_0)) \, \, \T{ and } \, \, \, \pi_2 \colon C(SU_q(2)) \to \B B\big(\ell^2(\nn_0) \ot \ell^2(\zz) \big)
\]
defined by the formulae
\[
\begin{split}
& \rho(u_{21}) := D \, \, \T{and} \, \, \, \rho(u_{22}) := W \\
  & \pi_2(u_{21}) := D \ot U \, \, \T{and} \, \, \, \pi_2(u_{22}) := W \ot 1 .
\end{split}
\]
Letting $\De^{(\ell - 1)}\colon C(SU_q(N)) \to C(SU_q(N))^{\ot \ell}$ denote the $(\ell - 1)$-times iterated coproduct we define the unital $*$-homomorphism
\[
\pi_N := \big( (\rho \Phi_\ell) \ot (\rho \Phi_{\ell - 1}) \olo (\pi_2 \Phi_1) \big) \De^{(\ell - 1)} \colon  C(SU_q(N)) \to \B B\big( \ell^2(\nn_0)^{\ot \ell} \ot \ell^2(\zz) \big) .
\]
A straightforward verification now establishes the claimed identities for the operators $\pi_N(z_j) = \pi_N(u_{Nj})$, $j \in \{1,2,\ldots,N\}$.
\end{proof} 

For $q \in (0,1)$ we introduce the following subset of $[0,1]^\ell$: 
\[
U_{\ell,q} := \big\{ \big(q^{2\sum_{j=1}^\ell \! n_j }, q^{2\sum_{j=1}^{\ell-1} n_j}(1- q^{2n_\ell }),\dots,q^{2n_1}(1-q^{2n_2})\big)  \mid n_1,n_2,\dots ,n_\ell\in \mathbb{N}_0 \big\} . 
\]
We let $\Omega_{\ell,q}$ denote the closure of $U_{\ell,q}$ inside $[0,1]^\ell$. Remark that $U_{\ell,q}$ is isomorphic as a set with $\nn_0^\ell$ via the map
\begin{equation}\label{eq:ennuuu}
  \la(n_1,n_2,\ldots,n_\ell) := \big( q^{2\sum_{j=1}^\ell \! n_j }, q^{2\sum_{j=1}^{\ell-1} n_j}(1- q^{2n_\ell }),\dots,q^{2n_1}(1-q^{2n_2}) \big) .
  %\q n \in \nn_0^\ell .
\end{equation}

For $q = 1$ we let $\Omega_{\ell,q}$ denote the standard $\ell$-simplex inside $[0,1]^\ell$ defined by
\[
\Om_{\ell,1} := \big\{ (t_1,t_2,\ldots,t_\ell) \in [0,1]^\ell \mid \sum_{j = 1}^\ell t_j \leq 1 \big\} . 
\]

For each $j\in \{1,2,\dots ,N\}$ we put $A_j := \sum_{i=1}^j z_iz_i^*$ and remark that $A_N = 1$. We also define $A_0 := 0$. A useful observation is that
\begin{align}
\label{zjzj}
z_j z_j^* =A_j-A_{j-1} \ \  \ \text{and} \ \  \ z_j^* z_j=A_j-q^2 A_{j-1} \q \T{for all } j\in \{1,2,\dots, N\} .
\end{align}

\begin{lemma}\label{l:character}
  Suppose that $q \in (0,1)$. Let $\chi \colon D_q^\ell \to \cc$ be a character and let $j \in \{1,2,\ldots,\ell\}$. If $\chi(A_{j+1}) = q^{2k}$ for some $k \in \nn_0 \cup \{\infty\}$, then $\chi(A_j) = q^{2m}$ for some $m \in \nn_0 \cup \{ \infty\}$ with $m \geq k$.
\end{lemma}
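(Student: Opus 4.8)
The plan is to reduce the statement to an elementary ``the eigenvalue escapes to infinity'' argument, after transporting the character $\chi$ into a Hilbert space representation of $C(S_q^{2\ell+1})$. The key point is that $D_q^\ell$ is commutative and by itself carries no information about how $A_j$ interacts with $z_{j+1}$, so one has to feed in the ambient relations \eqref{eq:quasph}.

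\textbf{Step 1: two commutation identities.} First I would record, inside $C(S_q^{2\ell+1})$, the identities
\[
A_j\, z_{j+1} = q^2\, z_{j+1}\, A_j \qquad\text{and}\qquad A_{j+1}\, z_{j+1} = z_{j+1}\, A_{j+1} .
\]
The first follows from $z_i z_{j+1} = q z_{j+1} z_i$ and $z_i^* z_{j+1} = q z_{j+1} z_i^*$ for $i \leq j$ (both contained in \eqref{eq:quasph}), which give $z_i z_i^* z_{j+1} = q^2 z_{j+1} z_i z_i^*$; summing over $i \in \{1,\dots,j\}$ and using $A_j = \sum_{i \leq j} z_i z_i^*$ yields it. The second is then immediate from $A_{j+1} = A_j + z_{j+1} z_{j+1}^*$ together with $z_{j+1}^* z_{j+1} = A_{j+1} - q^2 A_j$ from \eqref{zjzj}, since $A_{j+1} z_{j+1} = q^2 z_{j+1} A_j + z_{j+1}(A_{j+1} - q^2 A_j) = z_{j+1} A_{j+1}$.

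\textbf{Step 2: realise $\chi$ as a vector eigenvalue.} I would extend $\chi$ to a state $\psi$ on $C(S_q^{2\ell+1})$ by Hahn--Banach and pass to the GNS triple $(\pi,H,\xi)$. Since $\chi$ is multiplicative on $D_q^\ell$ one gets $\psi\big((f-\chi(f))^*(f-\chi(f))\big)=0$ for all $f \in D_q^\ell$, hence $\pi(f)\xi = \chi(f)\xi$ for every $f \in D_q^\ell$; in particular $\pi(A_j)\xi = s\,\xi$ and $\pi(A_{j+1})\xi = q^{2k}\xi$, where $s := \chi(A_j) \in [0,q^{2k}]$ and we put $q^{2\infty}:=0$. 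Writing $Z := \pi(z_{j+1})$ and taking adjoints in Step 1, we obtain $\pi(A_j) Z^* = q^{-2} Z^* \pi(A_j)$, $\pi(A_{j+1}) Z^* = Z^* \pi(A_{j+1})$, and $Z Z^* = \pi(A_{j+1}) - \pi(A_j)$.

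\textbf{Step 3: iterate $Z^*$.} Set $\xi_i := (Z^*)^i \xi$. A short induction from the three relations of Step 2 gives $\pi(A_j)\xi_i = q^{-2i} s\, \xi_i$, $\pi(A_{j+1})\xi_i = q^{2k}\xi_i$, and
\[
\|\xi_{i+1}\|^2 = \langle Z Z^* \xi_i, \xi_i \rangle = \big(q^{2k} - q^{-2i} s\big)\,\|\xi_i\|^2 .
\]
Thus $\|\xi_i\| > 0$ forces $q^{-2i} s \leq q^{2k}$. If $s = 0$ we are done with $m = \infty$. If $s > 0$, then (as $q \in (0,1)$) $q^{-2i} s \to \infty$, so there is a smallest $i_0$ with $q^{-2 i_0} s \geq q^{2k}$; for $i < i_0$ the strict inequality $q^{-2i}s < q^{2k}$ propagates $\|\xi_i\|>0$ to $\|\xi_{i+1}\| > 0$ via the displayed identity, so $\|\xi_{i_0}\| > 0$, and that same identity then also gives $q^{-2 i_0} s \leq q^{2k}$. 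Hence $q^{-2 i_0} s = q^{2k}$, i.e.\ $s = q^{2m}$ with $m := k + i_0 \geq k$, as desired.

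The step I expect to be the real content is Step 1 together with the GNS observation in Step 2: the lemma is invisible from within the commutative algebra $D_q^\ell$, and the whole rigidity comes from the single relation $A_j z_{j+1} = q^2 z_{j+1} A_j$ combined with the positivity of $\|(Z^*)^{i+1}\xi\|^2$. Once these are in hand, Step 3 is just the observation that the eigenvalue of $\pi(A_j)$ along the tower $(Z^*)^i\xi$ can neither overshoot $q^{2k}$ nor run off to infinity, so it must land on a power $q^{2m}$ with $m\geq k$.
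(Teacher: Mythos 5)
Your proof is correct, and it takes a genuinely different route from the one in the paper, even though both are powered by the same algebraic engine: the pair of identities $z_{j+1}z_{j+1}^* = A_{j+1}-A_j$ and $z_{j+1}^*z_{j+1} = A_{j+1}-q^2A_j$, which force the value of $A_j$ to rescale by $q^{-2}$ beneath the ceiling $\chi(A_{j+1})=q^{2k}$. The paper implements this purely spectrally: it passes to the quotient $C^*$-algebras $C/I_k$ and $F/J_k$ by the ideals generated by $A_{j+1}-q^{2k}$, uses the conditional expectation $E$ to show that $F/J_k\to C/I_k$ is injective (so that the spectrum of $A_j$ seen by the character coincides with its spectrum in the larger quotient), and then applies the identity $\T{Sp}(ab)\cup\{0\}=\T{Sp}(ba)\cup\{0\}$ to conclude that $\T{Sp}\big(\pi(A_j)\big)$ is stable under multiplication by $q^{-2}$ away from $q^{2k}$, whence boundedness pins it inside $\big\{q^{2m}\mid m\geq k\big\}$. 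You instead extend $\chi$ to a state, pass to its GNS representation, and run the explicit eigenvector ladder $(Z^*)^i\xi$, letting the positivity of $\|(Z^*)^{i+1}\xi\|^2$ do the work that the $\T{Sp}(ab)$ versus $\T{Sp}(ba)$ identity does in the paper. What your route buys is concreteness and the complete avoidance of the quotient-algebra and conditional-expectation step (the injectivity of $F/J_k\to C/I_k$ is the most delicate point of the paper's argument, and it is precisely the point the authors were urged to elaborate on); what the paper's route buys is that it never leaves the $C^*$-algebraic level and needs no representation. Your Step 1 identity $A_jz_{j+1}=q^2z_{j+1}A_j$ is exactly the computation underlying the paper's remark that $A_{j+1}$ is central in the subalgebra generated by $z_1,\dots,z_{j+1}$, so the two proofs also agree on which relation carries the rigidity.
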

\begin{proof}
  Let $C \su C(S_q^{2\ell + 1})$ denote the smallest unital $C^*$-subalgebra containing $z_1,z_2,\ldots,z_{j+1}$ and let $F \su D_q^\ell$ denote the smallest unital $C^*$-subalgebra containing $z_1 z_1^*, z_2 z_2^*,\ldots, z_{j+1} z_{j+1}^*$. We record that $A_{j+1} = \sum_{i = 1}^{j+1} z_i z_i^*$ belongs to the $C^*$-subalgebra $F \su C$ and that $A_{j+1}$ is a central element in $C$ (since $z_i$ commutes with $z_m z_m^*$ whenever $i < m$).

  Suppose now that $\chi(A_{j+1}) = q^{2k}$ for some $k \in \nn_0 \cup \{\infty\}$. Define the ideals $I_k \su C$ and $J_k \su F$ as the smallest ideals containing $A_{j + 1} - q^{2k}$. The conditional expectation $E \colon C(S_q^{2\ell + 1}) \to D_q^\ell$ from Lemma \ref{l:imE} restricts to a faithful conditional expectation $E \colon C \to F$, and since $A_{j+1}$ is central in $C$ we get that $E$ induces a unital positive map $[ E ] \colon C/I_k \to F/J_k$ between the quotient $C^*$-algebras. In particular, we obtain that the inclusion $F \to C$ induces an injective $*$-homomorphism $F/J_k \to C/I_k$.

Let $\pi \colon C \to C/I_k$ and $p \colon F \to F/J_k$ denote the quotient maps. Since $F/J_k$ can be regarded as a unital $C^*$-subalgebra of $C/I_k$ we get the following identity of spectra:
\begin{equation}\label{eq:idespec}
\T{Sp}\big( p(A_j) \big) = \T{Sp}\big( \pi(A_j) \big) .
\end{equation}

We are now interested in the spectrum of $\pi(A_j) \in C/I_k$. Suppose therefore that $\la$ belongs to the spectrum of $\pi(A_j)$ and that $\la \neq q^{2k}$. Inside $C/I_k$ we have the relations
\begin{equation}\label{eq:relations}
\pi(z_{j+1}) \pi(z_{j+1}^*) = q^{2k} - \pi(A_j) \q \T{and} \q \pi(z_{j+1}^*) \pi(z_{j+1}) = q^{2k} - q^2 \pi(A_j) .
\end{equation}
 Using the following standard identity of spectra
\[
\T{Sp}\big( \pi(z_{j+1}) \pi(z_{j+1}^*) \big) \cup \{0\} = \T{Sp}\big( \pi(z_{j+1}^*) \pi(z_{j+1}) \big) \cup \{0\}
\]
together with \eqref{eq:relations} we deduce that $q^{2k} - \la$ belongs to the spectrum of $q^{2k} - q^2 \pi(A_j)$. This shows that $q^{-2} \la$ lies in the spectrum of $\pi(A_j)$. We conclude from these observations that
\begin{equation}\label{eq:specinc}
\T{Sp}\big( \pi(A_j) \big) \su \big\{ q^{2m} \mid m \in \nn_0 \cup \{ \infty\} \, , \, \, m \geq k \big\} .
\end{equation}

To end the proof of the lemma, we notice that the character $\chi \colon D_q^\ell \to \cc$ induces a character $[\chi] \colon F/J_k \to \cc$. The identity of spectra from \eqref{eq:idespec} together with the inclusion from \eqref{eq:specinc} now entail that
\[
\chi(A_j) = [ \chi]( p(A_j)) \in \T{Sp}\big( p(A_j)\big) \su  \big\{ q^{2m} \mid m \in \nn_0 \cup \{ \infty\} \, , \, \, m \geq k \big\} . \qedhere
\]
\end{proof}

As a consequence of Lemma \ref{l:character} we get that
\[
\chi(A_j) \in \big\{ q^{2k} \mid k \in \nn_0 \big\} \cup \{0\} \q \T{for all } j \in \{1,2,\ldots,\ell\} 
\]
whenever $q \neq 1$ and $\chi \colon D_q^\ell \to \cc$ is a character.

The following result can be found as \cite[Lemma 4.2]{HoSz:QSPS} for $q \in (0,1)$ (but we include the case where $q = 1$ here). Notice however that, comparing with \cite[Lemma 4.2]{HoSz:QSPS}, our Lemma \ref{l:character} provides a more detailed argument for the possible values of characters on the quantized simplex $D_q^\ell$. 

\begin{prop}\label{p:diagonal}
The unital $C^*$-subalgebra $D_q^\ell$ is commutative and $*$-isomorphic to $C(\Omega_{\ell,q})$ via the map which sends $z_jz_j^*$ to the $j$'th coordinate function for every $j\in \{1,2,\ldots,\ell\}$. 
\end{prop}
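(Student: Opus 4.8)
The plan is to reduce the whole statement to a description of a joint spectrum. Since $z_1 z_1^*, \dots, z_\ell z_\ell^*$ are commuting positive elements with $\sum_{i=1}^N z_i z_i^* = 1$, the unital $C^*$-algebra $D_q^\ell$ they generate is commutative with norm-one generators, hence $*$-isomorphic to $C(X)$, where $X \su [0,1]^\ell$ is the joint spectrum of the tuple $(z_1 z_1^*, \dots, z_\ell z_\ell^*)$; under this Gelfand isomorphism $z_j z_j^*$ corresponds to the $j$'th coordinate function and $X = \big\{ (\chi(z_1 z_1^*), \dots, \chi(z_\ell z_\ell^*)) \mid \chi \text{ a character on } D_q^\ell \big\}$. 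It then suffices to prove $X = \Om_{\ell,q}$.

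For $q = 1$ I would argue classically: here $C(S_1^{2\ell+1}) = C(S^{2\ell+1})$ with $z_j$ the coordinate functions on $S^{2\ell+1}\su\cc^N$ (the last row of a special unitary matrix ranges over all unit vectors). The inclusion $D_1^\ell \hookrightarrow C(S^{2\ell+1})$ then dualizes, via Gelfand duality, to a continuous surjection $S^{2\ell+1} \to X$, $v \mapsto (|v_1|^2, \dots, |v_\ell|^2)$, whose range is exactly $\big\{ (t_1, \dots, t_\ell) \in [0,1]^\ell \mid \sum_{j=1}^\ell t_j \leq 1 \big\} = \Om_{\ell,1}$ (given such a tuple, realize it by choosing $|v_N|^2 = 1 - \sum_{j=1}^\ell t_j$ and arbitrary phases).

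Now fix $q \in (0,1)$ and prove $X = \Om_{\ell,q}$ by two inclusions. For $\Om_{\ell,q} \su X$ I would use the representation $\pi_N$ of Proposition \ref{p:repsphere}: since $\pi_N(z_j z_j^*) = \pi_N(z_j)\pi_N(z_j)^*$, and using $D = D^*$, $UU^* = 1$ and $WW^* e_n = (1 - q^{2n})e_n$, this operator equals $(D^2)^{\ot \ell} \ot 1$ for $j = 1$ and $(D^2)^{\ot (N-j)} \ot WW^* \ot 1^{\ot (j-2)} \ot 1$ for $j > 1$ — in particular it is diagonal in the standard basis. Evaluating the vector states $\langle \pi_N(\cd)\xi,\xi\rangle$ at $\xi = e_{n_1} \ot \dots \ot e_{n_\ell} \ot e_m$ returns precisely the point $\la(n_1,\dots,n_\ell)$ of \eqref{eq:ennuuu}; since all generators of $D_q^\ell$ act diagonally on the same vector $\xi$, this vector state restricts to a character of $D_q^\ell$. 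Hence $U_{\ell,q} \su X$, and as $X$ is closed, $\Om_{\ell,q} = \ov{U_{\ell,q}} \su X$.

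For the reverse inclusion $X \su \Om_{\ell,q}$, take a character $\chi$ on $D_q^\ell$ and put $a_j := \chi(A_j)$, so that $a_0 = 0$, $a_N = 1$ and $\chi(z_j z_j^*) = a_j - a_{j-1}$. Starting from $a_N = q^0$ and applying Lemma \ref{l:character} repeatedly down to $j = 1$ gives $a_j = q^{2m_j}$ with $m_1 \geq m_2 \geq \dots \geq m_\ell \geq 0$ in $\nn_0 \cup \{\infty\}$ (reading $q^{2\infty} := 0$). Setting $n_1 := m_\ell$ and $n_j := m_{\ell-j+1} - m_{\ell-j+2}$ for $2 \leq j \leq \ell$, a telescoping computation shows $\la(n_1,\dots,n_\ell) = (\chi(z_1 z_1^*),\dots,\chi(z_\ell z_\ell^*))$ whenever all $m_j$ are finite, so the point lies in $U_{\ell,q}$; and if $m_1 = \dots = m_{j_0} = \infty > m_{j_0+1}$ for some $j_0$, the same point equals $\lim_{t \to \infty} \la(n_1,\dots,n_{\ell-j_0},t,\dots,t)$ and hence lies in $\ov{U_{\ell,q}} = \Om_{\ell,q}$. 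The main obstacle is precisely this last step — converting the spectral constraints of Lemma \ref{l:character} on the values $\chi(A_j)$ into membership in $\Om_{\ell,q}$, in particular handling the boundary characters where some $\chi(A_j)$ vanishes through the limiting procedure above — together with the routine but error-prone bookkeeping of the tensor-factor order in $\pi_N$ and of the index reversal built into the map $\la$.
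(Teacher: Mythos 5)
Your proposal is correct and follows essentially the same route as the paper: the inclusion $U_{\ell,q}\su X$ via the joint eigenvectors of $\pi_N$ is exactly the paper's surjectivity argument, and the reverse inclusion is the paper's appeal to Lemma \ref{l:character} and Gelfand duality, which you merely spell out in more detail (the telescoping identification of characters with points $\la(n)$ and the limiting treatment of characters with $\chi(A_j)=0$); the $q=1$ case is handled classically in both. No gaps.
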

\begin{proof}
Suppose that $q \neq 1$. For each $j \in \{1,2,\ldots,\ell\}$ we record that 
    \[
    \begin{split}
  & \pi_N(z_j z_j^*)( e_{n_1} \olo e_{n_{\ell}} \ot e_m) \\
  & \q = \fork{ccc}{
q^{2 \sum_{j = 1}^\ell n_j} \cd e_{n_1} \olo e_{n_{\ell}} \ot e_m & \T{for} & j = 1 \\
q^{2 \sum_{j = 1}^{N - j} n_j} \cd (1 - q^{2 n_{N-j+1}}) \cd e_{n_1} \olo e_{n_{\ell}} \ot e_m & \T{for} & j \in \{2,\ldots,\ell\} } 
      \end{split}
\]
whenever the indices $n_1,\ldots,n_{\ell}$ lie in $\nn_0$ and $m$ belongs to $\zz$. This entails that the $*$-homomorphism from $D_q^\ell$ to $C(\Om_{\ell,q})$ sending $z_j z_j^*$ to the $j$'th coordinate function is well-defined and surjective. The injectivity of this $*$-homomorphism is a consequence of Lemma \ref{l:character} and Gelfand duality.

Suppose now that $q = 1$. In this case, we may identify $C(S_q^{2\ell + 1})$ with the unital $C^*$-algebra of continuous functions on the $(2 \ell + 1)$-dimensional unit sphere $S^{2 \ell + 1} \su \cc^N$. The relevant isomorphism is realized by mapping $z_j$ to the $j^{\T{th}}$ coordinate projection $p_j\colon S^{2\ell + 1} \to \cc$. The unit sphere $S^{2 \ell + 1}$ comes equipped with an action of the $N$-torus $\B T^N \su \cc^N$ given by
  \[
  (\xi_1,\xi_2,\ldots,\xi_N) \cd (\la_1,\la_2,\ldots,\la_N) := ( \xi_1 \cd \la_1, \xi_2 \cd \la_2,\ldots,\xi_N \cd \la_N)
  \]
 for all $(\la_1,\la_2,\ldots,\la_N) \in \B T^N$ and $(\xi_1,\xi_2,\ldots,\xi_N) \in S^{2\ell + 1}$. The corresponding quotient space is isomorphic to the standard $\ell$-simplex $\Om_{\ell,1} \su [0,1]^\ell$. At the level of unital $C^*$-algebras we get that $D_1^\ell \cong C(S^{2\ell + 1}/\B T^N)$ and this establishes our claim.
\end{proof} 

For $q \neq 1$, the proof of Proposition \ref{p:diagonal} also shows that the restriction of the representation $\pi_N$ to the quantized simplex $D_q^\ell$ yields an injective $*$-homomorphism $\pi_N \colon D_q^\ell \to \B B\big( \ell^2(\nn_0)^{\ot \ell} \ot \ell^2(\zz) \big)$. This result is applied in the proof of the following proposition: 

%Note that an equivalent characterisation of $U_{\ell,q}$ is given by 
%\begin{align*}
%U_{\ell,q} =  \{ (q^{2 \lambda_1 },q^{2 \lambda_2} - q^{2 \lambda_1},\dots,q^{2\lambda_\ell}-q^{2\lambda_{\ell-1}})  \mid \lambda_1,\lambda_2,\dots ,\lambda_\ell\in \mathbb{N}_0, \lambda_1 \geq \lambda_2 \geq \dots \geq \lambda_\ell \}
%\end{align*}

\begin{prop}\label{p:injective}
Suppose that $q \in (0,1)$. The unital $*$-homomorphism $\pi_N \colon C(S_q^{2\ell + 1}) \to \B B\big( \ell^2(\nn_0)^{\ot \ell} \ot \ell^2(\zz) \big)$ is injective.
\end{prop}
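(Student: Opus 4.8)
The plan is to exploit the faithful conditional expectation $E\colon C(S_q^{2\ell+1}) \to D_q^\ell$ from Lemma \ref{l:imE} to reduce the injectivity of $\pi_N$ to the injectivity of $\pi_N$ restricted to the commutative subalgebra $D_q^\ell$, which has already been established in the remark following Proposition \ref{p:diagonal}. The key point is that $\pi_N$ is equivariant for the torus actions underlying $L$ and $R$, so that $\pi_N$ intertwines $E$ with a correponding ``diagonal'' compression on $\B B\big(\ell^2(\nn_0)^{\ot\ell}\ot\ell^2(\zz)\big)$. Concretely, the maximal torus $\B T^\ell$ acts on the Hilbert space $\ell^2(\nn_0)^{\ot\ell}\ot\ell^2(\zz)$ (for instance through the standard-basis grading by the difference of exponents in $z_j$ and $z_j^*$), and one checks that conjugation by this action implements on $\B B\big(\ell^2(\nn_0)^{\ot\ell}\ot\ell^2(\zz)\big)$ a conditional expectation $F$ onto the commutant of the torus with $\pi_N \circ E = F \circ \pi_N$.

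First I would make the equivariance precise: using the formulas for $\pi_N(z_j)$ from Proposition \ref{p:repsphere} together with the definition $\Phi$ of the torus embedding from Section \ref{ss:maxtorus}, verify that the strongly continuous unitary representations of $\B T^\ell$ on $\ell^2(\nn_0)^{\ot\ell}\ot\ell^2(\zz)$ coming from the left and right coactions make $\pi_N$ into a morphism of $\B T^\ell$-algebras. Averaging over $\B T^\ell\times\B T^\ell$ then gives a normal conditional expectation $F$ on $\B B\big(\ell^2(\nn_0)^{\ot\ell}\ot\ell^2(\zz)\big)$ with $F\circ\pi_N = \pi_N\circ E$. Second, I would run the standard positivity/faithfulness argument: suppose $x\in C(S_q^{2\ell+1})$ with $\pi_N(x^*x)=0$; then $\pi_N\big(E(x^*x)\big)=F\big(\pi_N(x^*x)\big)=0$, and since $\pi_N|_{D_q^\ell}$ is injective (hence isometric) this forces $E(x^*x)=0$; faithfulness of $E$ then gives $x^*x=0$, so $x=0$.

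The main obstacle is the first step — pinning down the unitary $\B T^\ell$-actions on $\ell^2(\nn_0)^{\ot\ell}\ot\ell^2(\zz)$ that implement the left and right torus coactions under $\pi_N$, and checking that averaging against them really reproduces $E=LR$ rather than some larger expectation. This requires tracking how $\Phi$ interacts with the iterated coproduct $\De^{(\ell-1)}$ used in the construction of $\pi_N$, i.e.\ understanding the bigrading on the representation space induced by the diagonal exponents of the weighted shifts $W$ and the bilateral shift $U$. Once that bookkeeping is done, the rest is the routine ``expectation $+$ faithfulness'' mechanism already used in the proof of Lemma \ref{l:character}. An alternative that sidesteps the torus entirely is to argue directly on standard-basis matrix coefficients: write a nonzero $x$ in the normal form spanned by $z_1^{n_1}\clc z_N^{n_N}(z_1^*)^{m_1}\clc (z_N^*)^{m_N}$, and show $\pi_N(x)$ has a nonzero entry by a leading-term analysis of the shifts; but the conditional-expectation route is cleaner and reuses machinery already in place.
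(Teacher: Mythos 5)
Your proposal is correct and follows essentially the same route as the paper: there, the expectation $E$ is implemented spatially by averaging over the strongly continuous unitary representation $U_\la(e_{n_1}\olo e_{n_\ell}\ot e_m) := \la_1^m\la_2^{n_\ell}\clc\la_N^{n_1}\cd e_{n_1}\olo e_{n_\ell}\ot e_m$ of the torus, which satisfies $U_\la\pi_N(z_j)U_\la^{-1}=\pi_N(\la_j\cd z_j)$ and hence reproduces $E=LR$ on the image, after which the conclusion is drawn exactly as you describe from the faithfulness of the two expectations and the injectivity of $\pi_N$ on $D_q^\ell$. The only cosmetic difference is that the paper averages over the full scaling action of $\B T^N$ on the generators rather than the $\B T^\ell\times\B T^\ell$ action coming from $L$ and $R$, which disposes of the bookkeeping you flag as the main obstacle.
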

\begin{proof}
  For each $\la = (\la_1,\la_2,\ldots,\la_N) \in \B T^N$ we define the unitary operator
  \[
  \begin{split}
  & U_\la \colon \ell^2(\nn_0)^{\ot \ell} \ot \ell^2(\zz) \to \ell^2(\nn_0)^{\ot \ell} \ot \ell^2(\zz) \\
  & U_\la( e_{n_1} \olo e_{n_\ell} \ot e_m) := \la_1^m \la_2^{n_\ell} \clc \la_N^{n_1} \cd e_{n_1} \olo e_{n_\ell} \ot e_m .
    \end{split}
  \]
  In this fashion we obtain a strongly continuous unitary representation of the $N$-torus on the Hilbert space $\ell^2(\nn_0^{\ot \ell}) \ot \ell^2(\zz)$. For each $\la \in \B T^N$ and $j \in \{1,2,\ldots,N\}$ we record that $U_\la \pi_N(z_j) U_\la^{-1} = \pi_N(\la_j \cd z_j )$ and we therefore obtain a faithful conditional expectation $\wit{E} \colon \pi_N( C(S_q^{2\ell + 1})) \to \pi_N( C(S_q^{2\ell + 1}))$ by putting
  \[
\inn{\eta, \wit{E}(x)\xi} := \int_{\B T^N} \binn{\eta, U_\la x U_\la^{-1} \xi} \, d \la
\]
for every $x \in \pi_N(C(S_q^{2\ell + 1}))$ and $\xi,\eta \in \ell^2(\nn_0^{\ot \ell}) \ot \ell^2(\zz)$. Comparing with the formula, provided in the proof of Lemma \ref{l:imE}, for the faithful conditional expectation $E \colon C(S_q^{2\ell + 1}) \to C(S_q^{2\ell + 1})$ we see that
\[
\wit{E}( \pi_N(x)) = \pi_N( E(x)) \q \T{for all } x \in C(S_q^{2\ell + 1}).
\]
The result of the present proposition now follows from the faithfulness of $\wit{E}$ and $E$ together with the injectivity of the restriction of $\pi_N$ to the quantized $\ell$-simplex $D_q^\ell$. 
\end{proof}

\begin{remark}
  Let us for a little while consider the universal unital $C^*$-algebra $C^*(z_1,z_2,\ldots,z_N)$ with generators $z_1,z_2,\ldots,z_N$ subject to the relations in \eqref{eq:quasph}. The argumentation carried out in the present section also shows that the representation $\pi_N$ becomes injective when defined on $C^*(z_1,z_2,\ldots,z_N)$ instead of $C(S_q^{2\ell + 1})$. Notice in this respect that for $C^*(z_1,z_2,\ldots,z_N)$ we obtain the conditional expectation $E$ appearing in Lemma \ref{l:imE} from the strongly continuous action of the $N$-torus determined by $(\la_1,\la_2,\ldots,\la_N) \cd z_j := \la_j \cd z_j$ for all $(\la_1,\la_2,\ldots,\la_N) \in \B T^N$ and $j \in \{1,2,\ldots,N\}$. We therefore get that $C(S_q^{2\ell + 1})$ becomes $*$-isomorphic to the universal $C^*$-algebra $C^*(z_1,z_2,\ldots,z_N)$.
    %Using the faithful conditional expectation $E$ and the description of $D_q^\ell$ from Proposition \ref{p:diagonal} it can be proved that $C(S_q^{2\ell + 1})$ does in fact agree with the  
\end{remark}

%and we moreover record that the operators $A_1,A_2,\ldots,A_\ell, 1 \in C(S_q^{2\ell + 1})$ also generate the quantized $\ell$-simplex $D_q^\ell$
%
%Notice that $C^*(A_1,A_2,\dots,A_\ell,1)\simeq C(\Omega_{\ell,q})$ and that 
%\begin{align*}
%\Sp(A_1)=\Sp(A_2)=\cdots = \Sp(A_\ell)=\{0\} \cup \{q^{2k}\mid k\in \mathbb{N}_0 \} .
%\end{align*}

\section{A formula for the Haar state on the quantum spheres}
       We are now ready to present our computation of the Haar state on the Vaksman-Soibelman quantum sphere $C(S_q^{2\ell + 1})$. As indicated earlier, we may restrict our attention to the smaller commutative $C^*$-subalgebra $D_q^\ell$. Our methods only work well for $q \in (0,1)$ since we rely on the non-triviality of the modular automorphism for the Haar state. It is nonetheless important to keep in mind the classical case where $q = 1$ and we therefore include it in the statement of our main theorem here below.

%{\red Throughout this section we suppose that $q \in (0,1)$.}

For each $m = (m_1,m_2,\ldots,m_\ell) \in \nn_0^\ell$ we put 
\[
|m| := \sum_{i = 1}^\ell m_i \q \T{and} \q A^m := A_1^{m_1} \cd A_2^{m_2} \clc A_\ell^{m_\ell} .
\]
For each $j \in \{1,2,\ldots,\ell\}$ we shall identify $\nn_0^j$ with a subset of $\nn_0^\ell$ via the injective map
\[
\io \colon \nn_0^j \to \nn_0^\ell \q \io(m_1,m_2,\ldots,m_j) := (m_1,m_2,\ldots,m_j, 0, \ldots, 0) .
\]

\begin{lemma}\label{l:h(xAj)}
 Suppose that $q \neq 1$. Let $j\in \{1,2,\dots,\ell\}$ and $m \in \io\big( \mathbb{N}_0^j \big)$. It holds that 
\begin{align*}
h( A^m A_j)=\frac{1-q^{2(j+|m| )}}{1-q^{2(N+|m|)}} h( A^m) .
\end{align*}
\end{lemma}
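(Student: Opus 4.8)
The plan is to compute $h(A^m A_j)$ by using the modular automorphism of the Haar state together with the defining relation $z_j^* z_j = A_j - q^2 A_{j-1}$ from \eqref{zjzj}. Since $A^m \in \io(\nn_0^j)$ involves only $A_1,\ldots,A_j$, and each $A_i$ with $i \leq j$ is a polynomial in $z_1 z_1^*,\ldots,z_i z_i^* \su D_q^\ell$, the element $A^m$ lies in the commutative subalgebra generated by $z_1 z_1^*,\ldots,z_j z_j^*$; in particular $A^m$ commutes with $z_{j+1}$. The key computational identity is $A_{j+1} = A_j + z_{j+1} z_{j+1}^*$, hence $A^m A_{j+1} = A^m A_j + A^m z_{j+1} z_{j+1}^*$, so it suffices to relate $h(A^m z_{j+1} z_{j+1}^*)$ back to $h(A^m A_j)$. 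Wait — more directly, the trick should be applied to $A_j$ itself: write $z_j z_j^* = A_j - A_{j-1}$ and use the trace-like property.

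Here is the cleaner route I would pursue. Recall $\eta = h\circ j$ and the modular identity $\eta(xy) = \eta(y\,\te(x))$ from Proposition \ref{p:modular}, where by Lemma \ref{l:modaut} the modular automorphism scales $z_j = u_{Nj}$ by $\te(z_j) = q^{2(N + j - N - 1)} z_j = q^{2(j-1)} z_j$, and hence $\te(z_j^*) = q^{2(1-j)} z_j^*$, so $\te$ fixes each $z_j z_j^*$ and therefore fixes all of $D_q^\ell$ and all $A_i$. Now write $A_j = A_{j-1} + z_j z_j^*$ and compute, for $m \in \io(\nn_0^j)$ (so $A^m$ commutes with $z_j$ and $z_j^*$ since it lies in the algebra generated by $z_1z_1^*,\ldots,z_j z_j^*$... actually $A^m$ need not commute with $z_j$):
\[
h(A^m z_j z_j^*) = h\big( (z_j^*)(A^m z_j) \big) = h\big( A^m z_j \, \te(z_j^*) \big) = q^{2(1-j)} h\big( A^m z_j z_j^* \big),
\]
which is vacuous. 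So instead I move $A^m$: since $A^m$ is a polynomial in $z_1z_1^*,\ldots,z_j z_j^*$ and these commute with $z_j z_j^*$, we do have $[A^m, z_j z_j^*] = 0$; but we need $[A^m, z_j] \ne 0$ in general. The productive move is $h(A^m z_j^* z_j)$. Using $z_j^* z_j = A_j - q^2 A_{j-1}$ and $z_j z_j^* = A_j - A_{j-1}$, and the modular computation $h(A^m z_j z_j^*) = h(z_j^* A^m z_j) = h(A^m z_j \te(z_j^*)) = q^{2(1-j)} h(z_j^* z_j A^m) = q^{2(1-j)} h(A^m z_j^* z_j)$ — here I used that $z_j^*$ commutes past $A^m$ up to the relation, which is the step requiring care. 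Substituting the two expressions for $z_j z_j^*$ and $z_j^* z_j$ in terms of $A_j, A_{j-1}$ gives $h(A^m(A_j - A_{j-1})) = q^{2(1-j)} h(A^m(A_j - q^2 A_{j-1}))$, i.e.\ a linear relation between $h(A^m A_j)$ and $h(A^m A_{j-1})$.

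Running this as an induction on $j$: the base case uses $A_0 = 0$, giving $h(A^m A_1)$ directly, and then each step expresses $h(A^m A_j)$ in terms of $h(A^m A_{j-1})$; one must also track how the exponent $|m|$ interacts, since raising the power of $A_{j-1}$ inside $A^m$ changes $|m|$. Iterating and telescoping should produce the closed form $\frac{1 - q^{2(j + |m|)}}{1 - q^{2(N + |m|)}} h(A^m)$; the denominator $1 - q^{2(N+|m|)}$ will appear from the normalization $A_N = 1$ feeding back through the recursion. The main obstacle is bookkeeping: correctly commuting $A^m$ past $z_j$ and $z_j^*$ using only the relations \eqref{eq:quasph} (the factors $q$ from $z_i z_j = q z_j z_i$ for $i < j$ and $z_i^* z_j = q z_j z_i^*$ accumulate as powers depending on $|m|$), and assembling the resulting geometric-type sum. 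I would organize this by first proving the one-step recursion $h(A^m A_j) = \frac{1 - q^{2(j-1+|m|)} \cdot(\text{correction})}{\cdots} h(A^m A_{j-1})$ cleanly as a sublemma, then induct.
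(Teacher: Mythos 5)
You have the right ingredients (the modular identity $h(xy)=h(y\,\te(x))$, the fact that $\te$ fixes every $z_sz_s^*$ and hence all of $D_q^\ell$, and the relations $z_kz_k^*=A_k-A_{k-1}$, $z_k^*z_k=A_k-q^2A_{k-1}$), but the recursion you set up runs in the wrong direction with the wrong generator, and this is a genuine gap rather than bookkeeping. Your one-step relation links $h(A^mA_j)$ to $h(A^mA_{j-1})$ by playing the modular trick with $z_j$; every version of this requires moving $z_j$ or $z_j^*$ past $A^m$ at the cost of a fixed power of $q$. But for $m\in\io(\nn_0^j)$ the element $A^m$ contains $A_j^{m_j}$ and hence $z_jz_j^*$, and $z_j$ does \emph{not} $q$-commute with $z_jz_j^*$: by \eqref{eq:quasph} their failure to commute involves the additive term $(1-q^2)A_{j-1}$, not a scalar factor. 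So the step you yourself flag as "requiring care" actually fails whenever $m_j>0$. Moreover the proposed anchor is unusable: $A_0=0$ gives $h(A^mA_0)=0$, which cannot normalize a multiplicative recursion for $h(A^mA_1)$, and telescoping your relation downward would force $h(A^mA_j)=0$.

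The paper avoids both problems by recursing \emph{upward}. For $k\in\{j,\dots,\ell\}$ it uses the generator $z_{k+1}$: since $A^m$ only involves $z_sz_s^*$ with $s\leq j\leq k$, the relations in \eqref{eq:quasph} give exactly $z_{k+1}A^m=q^{-2|m|}A^mz_{k+1}$, and combining this with $\te(z_{k+1}^*)=q^{-2k}z_{k+1}^*$ and the commutativity of $D_q^\ell$ yields
\begin{equation*}
q^{2(k+|m|)}\,h\big(A^m z_{k+1}^*z_{k+1}\big)=h\big(A^m z_{k+1}z_{k+1}^*\big),
\end{equation*}
hence $h(A^mA_k)=\frac{1-q^{2(k+|m|)}}{1-q^{2(k+1+|m|)}}\,h(A^mA_{k+1})$. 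Telescoping from $k=j$ up to $k=\ell$ and using $A_N=1$ produces the claimed formula, with the denominator $1-q^{2(N+|m|)}$ coming from the top of the ladder. Your passing remark that $A_N=1$ should "feed back through the recursion" points at the right normalization, but it is incompatible with the downward induction you actually describe; to repair the proof you must replace the $z_j$-step by the $z_{k+1}$-steps for $k\geq j$.
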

\begin{proof}
  We specify that $m = (m_1,m_2,\ldots,m_j,0,\ldots,0)$ for some $m_1,m_2,\ldots,m_j \in \nn_0$.

  Let $k\in \{ j,j+1,\dots,\ell\}$. We have from Lemma \ref{l:modaut} that $\theta(z_{k+1}^*) = q^{-2k} z_{k+1}^* $. Thus, using Proposition \ref{p:modular} and \eqref{eq:quasph}, we compute that 
\[
\begin{split}
  q^{2(k+|m| )}h( A^m z_{k+1}^* z_{k+1} )
  = q^{2(k+|m|)}h(z_{k+1}^* z_{k+1} A^m) 
  & = q^{2|m|} h(z_{k+1} A^m z_{k+1}^* ) \\
  & =h( A^m z_{k+1} z_{k+1}^* ) .
\end{split}
\]
Hence, by applying (\ref{zjzj}), it follows that 
\[
q^{2(k+|m| )}\cd \big( h(A^m A_{k+1})-q^2 (A^m A_{k}) \big) =
h(A^m A_{k+1})-h(A^mA_k)
\]
or equivalently 
\begin{align*}
h(A^mA_k) = \frac{1-q^{2(k+|m|)}}{1-q^{2(k+1+|m|)}} h(A^m A_{k+1} ) .
\end{align*}
Hence, by proceeding inductively on $ k $, we deduce that 
\begin{align*}
h(A^m A_j)=\frac{1-q^{2( j+|m|)}}{1-q^{2(k+1+|m| )}} h(A^m A_{k+1} ) 
\end{align*}
for every $k\in \{ j,j+1,\dots ,\ell\}$. Since $ A_N = A_{\ell+1}=1 $, the desired formula follows. 
\end{proof}

\begin{lemma}\label{l:monomials}
 Suppose that $q \neq 1$. For every $m = (m_1,m_2,\ldots,m_\ell) \in \nn_0^\ell$ it holds that
  \[
h(A^m) = \prod_{k = 1}^\ell \frac{1 - q^{2k}}{1 - q^{2( k + \sum_{i = 1}^k m_i)}} .
  \]
\end{lemma}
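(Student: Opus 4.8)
The plan is to prove the identity by induction on the total degree $|m| = \sum_{i=1}^\ell m_i$, with Lemma \ref{l:h(xAj)} as the only tool. For $|m| = 0$ we have $m = 0$, hence $A^m = 1$ and $h(A^m) = h(1) = 1$; since every partial sum $\sum_{i=1}^k m_i$ is then zero, the right-hand side equals $\prod_{k=1}^\ell \frac{1-q^{2k}}{1-q^{2k}} = 1$, so the base case holds.

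For the inductive step assume $|m| \geq 1$ and let $d := \max\{ i \in \{1,\ldots,\ell\} : m_i \geq 1 \}$, and let $e_d \in \nn_0^\ell$ denote the $d$-th standard basis vector. Since $m_i = 0$ for all $i > d$, the vector $m$ — and hence also $m - e_d$ — is supported on $\{1,\ldots,d\}$, so $m - e_d \in \io(\nn_0^d)$ and $A^m = A^{m - e_d} \cd A_d$. Applying Lemma \ref{l:h(xAj)} with the index $j = d$ and the vector $m - e_d$ (which has total degree $|m| - 1$) yields
\[
h(A^m) = \frac{1 - q^{2(d + |m| - 1)}}{1 - q^{2(N + |m| - 1)}} \, h(A^{m - e_d}),
\]
and since $|m - e_d| < |m|$ we may substitute the induction hypothesis for $h(A^{m-e_d})$.

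It then remains to reconcile the two closed forms. Writing $s_k := \sum_{i=1}^k m_i$, note that $\sum_{i=1}^k (m - e_d)_i = s_k$ for $k < d$ while $\sum_{i=1}^k (m-e_d)_i = s_k - 1$ for $k \geq d$, and moreover $s_k = |m|$ for every $k \geq d$ because $m$ vanishes above $d$. Substituting into the induction hypothesis and splitting both products at $k = d$, the factors with $k < d$ match verbatim, and the asserted formula for $h(A^m)$ reduces to the single telescoping identity
\[
\frac{1 - q^{2(d + |m| - 1)}}{1 - q^{2(N + |m| - 1)}} \prod_{k = d}^{\ell} \frac{1 - q^{2(k + |m|)}}{1 - q^{2(k + |m| - 1)}} = 1 ,
\]
which is immediate: the product telescopes to $\frac{1 - q^{2(N + |m| - 1)}}{1 - q^{2(d + |m| - 1)}}$, the top of the telescope being $1 - q^{2((\ell+1) + |m| - 1)}$ and $N = \ell + 1$. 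The computations are all routine; the one point that requires attention is this last bookkeeping step — in particular the observation that the partial sums $s_k$ are all equal to $|m|$ once $k \geq d$, which is precisely what makes the tail of the product collapse against the prefactor coming from Lemma \ref{l:h(xAj)}. It is also essential to strip off $A_d$ at the \emph{largest} occupied index $d$ rather than always at $A_\ell$, so that $m - e_d$ stays inside the domain $\io(\nn_0^d)$ on which Lemma \ref{l:h(xAj)} is available.
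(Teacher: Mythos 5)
Your proof is correct and follows essentially the same route as the paper: an induction that adds or removes one factor $A_d$ at the largest occupied index, with Lemma \ref{l:h(xAj)} supplying the recursion and a telescoping product closing the bookkeeping. The paper runs the induction ``forward'' (from $m \in \io(\nn_0^j)$ to $m + e_j$) rather than stripping $A_d$ off, but the computation is identical, and your explicit attention to why $m - e_d$ stays in $\io(\nn_0^d)$ is a welcome clarification of a point the paper leaves implicit.
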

\begin{proof}
  To ease the notation, put $C := \prod_{k = 1}^\ell (1 - q^{2k})$. The proof runs by induction on $m \in \nn_0^\ell$. The induction start confirms that $h(1) = 1$. For the induction step we let $j \in \{1,2,\ldots,\ell\}$ and suppose that 
\[
h(A^m) = C \cd \prod_{k = 1}^\ell \big(1 - q^{2( k + \sum_{i = 1}^k m_i)}\big)^{-1}
\]
for some $m \in \io( \nn_0^j)$. An application of Lemma \ref{l:h(xAj)} then shows that
\[
\begin{split}
h(A^m  A_j) & = \frac{1 - q^{2(j + |m|)}}{1 - q^{2(N + |m|)}} \cd h(A^m) \\
& = C \cd \frac{1 - q^{2(j + |m|)}}{1 - q^{2(N + |m|)}} \cd \prod_{k = 1}^\ell \big( 1 - q^{2( k + \sum_{i = 1}^k m_i)}\big)^{-1} \\
& = C \cd \prod_{k = 1}^{j-1} \big( 1 - q^{2( k + \sum_{i = 1}^k m_i)}\big)^{-1} \cd \prod_{k = j}^\ell \big( 1 - q^{2( k + 1 + |m|)}\big)^{-1} .
\end{split}
\]
This confirms the validity of the induction step.
%it then follows by induction on $j$ that $P(j)$ is true for every $j\in \{1,2,\dots,\ell \}$. Remark here that both the basis step and the induction step are completed by proceeding inductively on $m_j \in \nn_0$.
\end{proof}

For $q \neq 1$ we introduce a probability measure $\mu$ on the power set $\C P( \Om_{\ell,q})$ (which in this case agrees with the Borel $\si$-algebra). This probability measure is defined by $\mu(X) := 0$ for $X \cap U_{\ell,q} = \emptyset$ and by the formula
\[
\mu\big( \{ \la(n) \} \big) := \prod_{k = 1}^\ell (1 - q^{2k}) q^{2 (N -k) n_k} \q \T{for all } n \in \nn_0^\ell .
\]
Notice that we are here identifying $U_{\ell,q}$ with $\nn_0^\ell$ via the isomorphism of sets  from \eqref{eq:ennuuu}, to wit
\[
\la(n_1,n_2,\ldots,n_\ell) = \big( q^{2 \sum_{i = 1}^\ell n_i}, q^{2\sum_{i=1}^{\ell-1} n_i}(1- q^{2n_\ell }),\dots,q^{2n_1}(1-q^{2n_2}) \big) .
\]

For $q = 1$ we define the probability measure $\mu := \ell! \cd \nu$ where $\nu$ denotes the Lebesgue measure restricted to the Borel $\si$-algebra for the standard $\ell$-simplex $\Om_{\ell,1} \su [0,1]^\ell$.

\begin{theorem}\label{t:formula}
  For every $x\in C(S_q^{2\ell+1})$ it holds that
\begin{align*}
h(x)=   \int E(x) \, d\mu .
\end{align*}
In particular, if $x = z_1^{n_1} z_2^{n_2} \clc z_N^{n_N} (z_1^*)^{m_1} (z_2^*)^{m_2} \clc (z_N^*)^{m_N}$ where all the exponents belong to $\nn_0$, we have
\begin{align*}
h(x)=    \int \de_{n_1,m_1} \cd \de_{n_2,m_2} \clc \de_{n_N,m_N}\cdot x \, d\mu .
\end{align*}
\end{theorem}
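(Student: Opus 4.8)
The plan is to collapse the computation to the commutative subalgebra $D_q^\ell$ and then to match moments against Lemma~\ref{l:monomials}. Since the conditional expectation $E$ of Lemma~\ref{l:imE} is the restriction of $LR$ to $C(S_q^{2\ell+1})$ and $hL = h = hR$ by Proposition~\ref{p:LR}, we have $h(x) = h(LR(x)) = h(E(x))$ for every $x \in C(S_q^{2\ell+1})$; as $E$ takes values in $D_q^\ell$, it therefore suffices to prove that $h(f) = \int f \, d\mu$ for all $f \in D_q^\ell$, where $D_q^\ell$ is identified with $C(\Om_{\ell,q})$ via Proposition~\ref{p:diagonal} (the full statement then follows by applying this to $f = E(x)$). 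Now $D_q^\ell$ is generated as a $C^*$-algebra by the commuting self-adjoint elements $A_1,\ldots,A_\ell$ (because $z_jz_j^* = A_j - A_{j-1}$), hence it is the closed linear span of the monomials $A^m$, $m \in \nn_0^\ell$. Since $h$ and $f \mapsto \int f\,d\mu$ are bounded functionals, it is enough to establish that $h(A^m) = \int A^m\,d\mu$ for every $m \in \nn_0^\ell$.

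Assume first that $q \neq 1$ and identify $U_{\ell,q}$ with $\nn_0^\ell$ as in \eqref{eq:ennuuu}. Writing $S_i := n_1 + \cdots + n_i$, the $k$-th coordinate of $\la(n)$ equals $q^{2S_\ell}$ for $k = 1$ and $q^{2S_{\ell-k+1}} - q^{2S_{\ell-k+2}}$ for $2 \leq k \leq \ell$; since $A_j$ corresponds to the sum of the first $j$ coordinate functions on $\Om_{\ell,q}$, a telescoping sum yields $A_j(\la(n)) = q^{2S_{\ell-j+1}}$. Hence, setting $M_k := m_1 + \cdots + m_k$,
\[
A^m(\la(n)) = \prod_{j=1}^\ell q^{2 m_j S_{\ell-j+1}} = q^{2 \sum_{i=1}^\ell n_i M_{\ell-i+1}} .
\]
Summing over $n \in \nn_0^\ell$ against $\mu(\{\la(n)\}) = \prod_{k=1}^\ell (1-q^{2k}) q^{2(N-k)n_k}$ and collecting the exponents of $q$ coordinatewise, the sum factors into a product of geometric series,
\[
\int A^m\,d\mu = \prod_{k=1}^\ell (1-q^{2k}) \cdot \prod_{i=1}^\ell \frac{1}{1 - q^{2(N-i+M_{\ell-i+1})}} ,
\]
and reindexing the last product by $k = \ell-i+1$, for which $N-i = k$ and $M_{\ell-i+1} = M_k$, turns the right-hand side into $\prod_{k=1}^\ell \tfrac{1-q^{2k}}{1-q^{2(k+M_k)}}$, which is exactly $h(A^m)$ by Lemma~\ref{l:monomials}.

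For $q = 1$ I would use the classical identification $C(S_1^{2\ell+1}) \cong C(S^{2\ell+1})$, under which $h$ becomes integration against the normalized rotation-invariant probability measure on the sphere and $D_1^\ell \cong C(S^{2\ell+1}/\B T^N) \cong C(\Om_{\ell,1})$, with $A_j$ corresponding to $\xi \mapsto |\xi_1|^2 + \cdots + |\xi_j|^2$. Restricting $h$ to $D_1^\ell$ then amounts to pushing the uniform measure on $S^{2\ell+1} \su \cc^N$ forward along $\xi \mapsto (|\xi_1|^2,\ldots,|\xi_N|^2)$; the standard fact that this pushforward is the flat (Dirichlet) distribution on the standard $(N-1)$-simplex, together with $\T{vol}(\Om_{\ell,1}) = 1/\ell!$, shows that the induced measure on $\Om_{\ell,1}$ is precisely $\ell!\,\nu = \mu$. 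Finally, the second displayed formula in the theorem is immediate from the first and the identity $E(x) = \de_{n_1,m_1} \clc \de_{n_N,m_N} \cdot x$ recorded in the proof of Lemma~\ref{l:imE}.

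I expect the main technical point to be the index bookkeeping in the case $q \neq 1$: establishing the telescoping evaluation $A_j(\la(n)) = q^{2S_{\ell-j+1}}$ and then carrying out the reindexing $k = \ell-i+1$ (using $N = \ell+1$, so that $N-i = k$) so that the resulting product coincides with the formula in Lemma~\ref{l:monomials}. The case $q = 1$ is routine once the Dirichlet-distribution fact is invoked, and the reduction to the monomials $A^m$ on $D_q^\ell$ follows directly from the results already established.
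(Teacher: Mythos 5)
Your proof is correct and follows essentially the same route as the paper's: reduce via $h = hE$ to the monomials $A^m$ on $D_q^\ell \cong C(\Om_{\ell,q})$, evaluate $\int A^m\,d\mu$ as a product of geometric series, and match the result against Lemma~\ref{l:monomials}. The only substantive difference is that you also spell out the $q=1$ case via the Dirichlet pushforward and the telescoping identity $A_j(\la(n)) = q^{2S_{\ell-j+1}}$ in detail, both of which the paper's proof leaves implicit.
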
 
\begin{proof}
We focus on the case where $q \neq 1$. Recall that Lemma \ref{l:imE} shows that $E\colon C(S_q^{2\ell+1}) \to C(\Omega_{\ell,q})$ is a conditional expectation. Moreover, it is a consequence of Proposition \ref{p:LR} that $h = hE$. Hence, it is sufficient to consider $x\in C(\Omega_{\ell,q})$. Now, since the set $\{ A^m \mid m \in \mathbb{N}_0^\ell  \}$ is linearly dense in $C(\Omega_{\ell,q})$, we may restrict to the case where $x = A^m$ for some $m = (m_1,m_2,\ldots,m_\ell) \in \nn_0^\ell$. In this case, we have that
\[
x\big( \la(n) \big)
= q^{2 m_1 \sum_{i = 1}^\ell n_i} \cd q^{2 m_2 \sum_{i=1}^{\ell-1} n_i} \clc q^{2 m_\ell n_1}
= \prod_{k = 1}^\ell q^{ 2n_k \sum_{i = 1}^{N - k} m_i}
\]
for all $n = (n_1,n_2,\ldots,n_\ell) \in \nn_0^\ell$. It therefore follows that
\begin{equation}\label{eq:integral}
\begin{split}
  \int x \, d\mu
  & = \sum_{n_1,n_2,\ldots,n_\ell = 0}^\infty
\prod_{k = 1}^\ell q^{ 2n_k \sum_{i = 1}^{N - k} m_i} \cd \mu\big( \{\la(n) \} \big) \\
& = \prod_{k = 1}^\ell (1 - q^{2k}) \sum_{n_k = 0}^\infty q^{ 2 n_k (N-k + \sum_{i = 1}^{N - k} m_i)}
= \prod_{k = 1}^\ell \frac{1 - q^{2k} }{1 - q^{2(N-k + \sum_{i = 1}^{N-k} m_i)} } .
\end{split}
\end{equation}
An application of Lemma \ref{l:monomials} shows that the right hand side of \eqref{eq:integral} agrees with $h(A^m)$, and the theorem is therefore proved.
\end{proof}

%\begin{remark}
%An application of (\ref{eq:aut}) shows that the formula for the Haar state proved in Theorem \ref{t:formula} also applies if we redefine $z_j:= q^{j-N}u_{jN}$. 
%\end{remark}

\section{Applications of the main theorem}
We end this paper by deriving three important corollaries of our main theorem. These three corollaries are also consequences of the work of Nagy who deals with the more general case of quantum $SU(N)$, see \cite{Nag:HQG,Nag:DQP}. Our route to the corollaries here below is however quite different from the route followed by Nagy.

\begin{cor}
The Haar state $h \colon C(S_q^{2\ell + 1}) \to \cc$ is faithful. 
\end{cor}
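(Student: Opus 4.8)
The plan is to deduce faithfulness of $h$ on $C(S_q^{2\ell+1})$ from Theorem \ref{t:formula} together with the faithfulness properties of the conditional expectations already established. First I would treat the case $q \in (0,1)$. Suppose $x \in C(S_q^{2\ell+1})$ is nonzero; I want to show $h(x^*x) > 0$. By Theorem \ref{t:formula} we have $h(x^*x) = \int E(x^*x)\, d\mu$, where $E \colon C(S_q^{2\ell+1}) \to D_q^\ell \cong C(\Omega_{\ell,q})$ is the faithful conditional expectation from Lemma \ref{l:imE}. Since $E$ is a faithful conditional expectation and $x \neq 0$, the element $E(x^*x)$ is a nonzero positive element of $C(\Omega_{\ell,q})$, hence a nonzero nonnegative continuous function on $\Omega_{\ell,q}$. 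So it suffices to show that $\mu$ has full support on $\Omega_{\ell,q}$, i.e.\ that every nonempty open subset of $\Omega_{\ell,q}$ has strictly positive $\mu$-measure; then $\int E(x^*x)\, d\mu > 0$ follows because a nonzero nonnegative continuous function is strictly positive on some open set.

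The key remaining point is therefore the support statement, and this is where the main (mild) obstacle lies. For $q \in (0,1)$, the measure $\mu$ is the purely atomic measure with $\mu(\{\lambda(n)\}) = \prod_{k=1}^\ell (1-q^{2k}) q^{2(N-k)n_k} > 0$ for every $n \in \nn_0^\ell$, and $\Omega_{\ell,q}$ is by definition the closure of $U_{\ell,q} = \{\lambda(n) : n \in \nn_0^\ell\}$. Hence every point of $U_{\ell,q}$ is an atom of positive mass, and since $U_{\ell,q}$ is dense in $\Omega_{\ell,q}$, any nonempty open subset of $\Omega_{\ell,q}$ contains a point $\lambda(n)$ and thus has measure at least $\mu(\{\lambda(n)\}) > 0$. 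This establishes full support and completes the case $q \in (0,1)$.

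For $q = 1$ the argument is entirely analogous: $E \colon C(S_1^{2\ell+1}) \to D_1^\ell \cong C(\Omega_{\ell,1})$ is again a faithful conditional expectation (it is the restriction of $LR$), and $\mu = \ell! \cdot \nu$ is a positive multiple of Lebesgue measure on the standard $\ell$-simplex $\Omega_{\ell,1} \subseteq [0,1]^\ell$, which has full support on $\Omega_{\ell,1}$. Thus for nonzero $x$ the function $E(x^*x)$ is nonzero, nonnegative and continuous, hence strictly positive on a nonempty open subset of $\Omega_{\ell,1}$, which forces $h(x^*x) = \int E(x^*x)\, d\mu > 0$. In both cases the conclusion is that $h$ is a faithful state on $C(S_q^{2\ell+1})$.
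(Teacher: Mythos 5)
Your proposal is correct and follows exactly the paper's own argument: deduce faithfulness from Theorem \ref{t:formula} via the faithfulness of the conditional expectation $E$ and the faithfulness of the state $\int \cdot \, d\mu$ on $C(\Omega_{\ell,q})$. The only difference is that you spell out why $\mu$ has full support (positive atoms on the dense set $U_{\ell,q}$ for $q<1$, Lebesgue measure for $q=1$), a detail the paper leaves implicit.
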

\begin{proof}
This follows immediately from Theorem \ref{t:formula} since both the conditional expectation $E \colon C(S_q^{2 \ell + 1}) \to C(\Om_{\ell,q})$ and the state on $C(\Om_{\ell,q})$ associated with the probability measure $\mu$ are faithful.
\end{proof}

For the second and the third corollary we fix a $\de \in (0,1)$ and consider the universal unital $C^*$-algebra $C(S_\bullet^{2\ell + 1})$ with $N + 1$ generators $z_1^\bullet, z_2^\bullet, \ldots, z_N^\bullet$ and $\xi$ subject to the relations
\begin{equation}\label{eq:quasphfie}
  \begin{split}
 z^\bullet_i z^\bullet_j &= \xi z^\bullet_j z^\bullet_i  \, \, , \, \, \, i < j \q 
(z^\bullet_i)^* z^\bullet_j = \xi z^\bullet_j (z^\bullet_i)^* \, \, , \, \, \,  i \neq j\\
\big[(z^\bullet_{j+1})^*, z^\bullet_{j+1}\big] & = (1-\xi^2)\sum_{i=1}^{j} z^\bullet_i(z^\bullet_i)^* \, \, , \, \, \, j\in \{1,2,\dots,\ell\}   \\
\big[(z^\bullet_1)^*, z^\bullet_1\big]& = 0 \q \sum_{i=1}^N z^\bullet_i (z^\bullet_i)^* =1 
\end{split}
\end{equation}
together with the requirement that $\xi$ be selfadjoint and central with spectrum contained in the closed unit interval $[\de,1]$. For each $q \in [\de,1]$ we then have the evaluation $*$-homomorphism $\T{ev}_q \colon C(S_\bullet^{2\ell + 1}) \to C(S_q^{2\ell + 1})$ given by $\T{ev}_q(z_j^{\bullet}) = z_j^q$ and $\T{ev}_q(\xi) = q$.

 \begin{cor}
  The Haar states for different values of $q \in [\de,1]$ form a continuous field of states in the sense that the map
  \[
[\de,1] \to \cc \q q \mapsto (h \ci \T{ev}_q)(x^\bullet)
\]
is continuous for every $x^{\bullet} \in C(S_\bullet^{2\ell + 1})$.
\end{cor}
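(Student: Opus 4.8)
The plan is to reduce the statement, by a standard approximation argument, to monomials, to evaluate these via Theorem~\ref{t:formula}, and then to observe that the resulting expressions are continuous in $q$ on all of $[\de,1]$. The only delicate point will be the behaviour at the endpoint $q=1$.

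\emph{Reduction to a dense subalgebra.} Since every $\T{ev}_q$ is a unital $*$-homomorphism and $h$ is a state, we have $|(h\ci\T{ev}_q)(x^\bullet)|\leq\|x^\bullet\|$ for all $q\in[\de,1]$, so a routine approximation argument ($3\varps$) shows it is enough to prove continuity of $q\mapsto(h\ci\T{ev}_q)(x^\bullet)$ for $x^\bullet$ ranging over a dense $*$-subalgebra of $C(S_\bullet^{2\ell+1})$. Now $\xi$ is central with $\T{Sp}(\xi)\su[\de,1]$, hence invertible, and the relations \eqref{eq:quasphfie} are exactly the sphere relations \eqref{eq:quasph} with $q$ replaced by $\xi$; so, repeating verbatim (over $\cc[\xi,\xi^{-1}]$) the reordering recorded just before Lemma~\ref{l:imE}, the dense $*$-subalgebra generated by $\xi,z_1^\bullet,\dots,z_N^\bullet$ is linearly spanned by the elements $\xi^k\cd(z_1^\bullet)^{n_1}\clc(z_N^\bullet)^{n_N}((z_1^\bullet)^*)^{m_1}\clc((z_N^\bullet)^*)^{m_N}$ with $k\in\zz$ and $n_i,m_i\in\nn_0$. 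It thus suffices to treat $x^\bullet$ of this form.

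\emph{Evaluation.} For such an $x^\bullet$ we have $\T{ev}_q(x^\bullet)=q^k\cd z_1^{n_1}\clc z_N^{n_N}(z_1^*)^{m_1}\clc(z_N^*)^{m_N}$, so by the second formula in Theorem~\ref{t:formula} the map $q\mapsto(h\ci\T{ev}_q)(x^\bullet)$ is identically $0$, hence continuous, unless $n_i=m_i$ for all $i$. Assume $n_i=m_i$. By Lemma~\ref{l:imE} the element $z_1^{n_1}\clc z_N^{n_N}(z_1^*)^{n_1}\clc(z_N^*)^{n_N}$ lies in $D_q^\ell$, and the same reordering expresses it as $\sum_\ga c_\ga(q)\cd A_1^{(m_\ga)_1}\clc A_\ell^{(m_\ga)_\ell}$ with $m_\ga\in\nn_0^\ell$ and with the $c_\ga$ fixed Laurent polynomials (the image under $\T{ev}_q$ of one reordering identity performed in $C(S_\bullet^{2\ell+1})$ over the central invertible $\xi$). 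Combining with Lemma~\ref{l:monomials} one gets, for $q\in(\de,1)$,
\[
(h\ci\T{ev}_q)(x^\bullet)=q^k\sum_\ga c_\ga(q)\prod_{r=1}^\ell\frac{1-q^{2r}}{1-q^{2(r+\si_r(\ga))}},\qquad \si_r(\ga):=\sum_{i=1}^r(m_\ga)_i\geq0 .
\]
Each factor here equals $\tfrac{1+q^2+\dots+q^{2(r-1)}}{1+q^2+\dots+q^{2(r+\si_r(\ga)-1)}}$ and hence, together with the $c_\ga$ (harmless as $\de>0$), extends continuously across $q=1$, with value $\tfrac{r}{r+\si_r(\ga)}$ there (read as $1$ when $\si_r(\ga)=0$). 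So $q\mapsto(h\ci\T{ev}_q)(x^\bullet)$ is continuous on $[\de,1)$ and extends continuously to $[\de,1]$, and it only remains to match the extension with $(h\ci\T{ev}_1)(x^\bullet)$ at $q=1$.

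\emph{Matching at $q=1$ — the main obstacle.} For $q=1$ the relations make everything commute, so $\T{ev}_1(x^\bullet)=\prod_j(z_jz_j^*)^{n_j}$, and specialising the reordering identity at $q=1$ gives $\prod_j(z_jz_j^*)^{n_j}=\sum_\ga c_\ga(1)\cd A_1^{(m_\ga)_1}\clc A_\ell^{(m_\ga)_\ell}$; applying $h$ and using Theorem~\ref{t:formula} for $q=1$ reduces the matching to the single identity $h(A_1^{m_1}\clc A_\ell^{m_\ell})=\prod_{r=1}^\ell\tfrac{r}{r+\sum_{i=1}^r m_i}$, i.e.\ to the classical Dirichlet--Liouville evaluation $\ell!\int_{\Om_{\ell,1}}(t_1)^{m_1}(t_1+t_2)^{m_2}\clc(t_1+\dots+t_\ell)^{m_\ell}\,dt=\prod_{r=1}^\ell\tfrac{r}{r+\sum_{i=1}^r m_i}$, which follows by a short induction on $\ell$ (integrating out $t_\ell$ last) and is exactly the computation underlying the case $q=1$ of Lemma~\ref{l:monomials}. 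This is the one substantive point: away from $q=1$ the answer is a manifestly continuous explicit expression and all the reductions are routine, but at the endpoint one must reconcile the $q\to1^-$ limit of the $q$-deformed formula of Lemma~\ref{l:monomials} with the classical value $h(A^m)$ — equivalently, establish the weak convergence $\mu_q\rightharpoonup\mu_1$ as $q\to1^-$.
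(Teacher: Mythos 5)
Your proposal is correct and follows essentially the same route as the paper: reduce by density to monomials, use $hE=h$ (via the reordering) to pass to the elements $A^m$, read off continuity on $[\de,1)$ from the explicit formula of Lemma~\ref{l:monomials}, and verify the match at $q=1$ by the classical Dirichlet-type integral $\ell!\int_{\Om_{\ell,1}}t_1^{m_1}(t_1+t_2)^{m_2}\clc(\sum_j t_j)^{m_\ell}\,d\nu=\ell!\prod_{k=1}^\ell\big(k+\sum_{i=1}^k m_i\big)^{-1}$. You are somewhat more explicit than the paper about the uniform bound justifying the density argument and about the $q$-dependence of the reordering coefficients, but the substance is identical.
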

\begin{proof}
  By density and linearity, it suffices to prove the result when $x^{\bullet}$ is a monomial in the operators $z_1^{\bullet}, (z_1^{\bullet})^*, \ldots, z_N^{\bullet}, (z_N^{\bullet})^*$. However, using that $h E = h$ we may restrict our attention to showing that the map
  \[
[\de,1] \to \cc \q q \mapsto h\big((A^q)^m \big)
\]
is continuous whenever $m = (m_1,m_2,\ldots,m_\ell) \in \nn_0^\ell$. We specify here that the superscript $q$ indicates that the usual monomial $A^m$ is considered as an operator in $C(\Om_{\ell,q})$.

The continuity of the expression $h\big((A^q)^m\big)$ on the interval $[\de,1)$ follows immediately from Lemma \ref{l:monomials} and we therefore only need to establish that
  \begin{equation}\label{eq:haarlim}
    \lim_{q \to 1} h\big((A^q)^m\big) = h\big((A^1)^m\big) .
    \end{equation}
Another application of Lemma \ref{l:monomials} shows that the left hand side of \eqref{eq:haarlim} agrees with
\[
\lim_{q\to 1} h\big( (A^q)^m \big) = \ell! \cd \prod_{k = 1}^\ell (k+\sum_{i=1}^k m_i)^{-1}.
\]
Regarding the right hand side of \eqref{eq:haarlim}, an exercise in calculus shows that
\[
\begin{split}
h\big((A^1)^m\big) & = \ell! \cd \int_{\Om_{\ell,1}} t_1^{m_1} (t_1 + t_2)^{m_2} \clc \big( \sum_{j = 1}^\ell t_j \big)^{m_\ell} d\nu \\
& = \ell! \cd \int_0^1 \int_0^{1 - t_1} \ldots \int_0^{1 - \sum_{j = 1}^{\ell - 1} t_j} t_1^{m_1} (t_1 + t_2)^{m_2} \clc (\sum_{j = 1}^\ell t_j)^{m_\ell}
\, dt_\ell \ldots dt_1 \\
& = \ell! \cd \prod_{k = 1}^\ell (k+\sum_{i=1}^k m_i)^{-1}.
\end{split}
\]
This proves the present corollary.
\end{proof} 

We may equip $C(S_\bullet^{2\ell + 1})$ with the structure of a unital $C([\de,1])$-$C^*$-algebra by sending $C([\de,1])$ to the center of $C(S_\bullet^{2\ell + 1})$ via the unital $*$-homomorphism which maps the identity function to $\xi$. Applying the evaluation $*$-homomorphism $\T{ev}_q$ we may identify the fibres for different values of $q \in [\de,1]$ with the corresponding Vaksman-Soibelman quantum sphere $C(S_q^{2\ell + 1})$. A combination of the above two corollaries with \cite[Th\'eor\`eme 3.3]{Bla:DCH} then yields the following:

\begin{cor}
The unital $C([\de,1])$-$C^*$-algebra $C(S_\bullet^{2\ell + 1})$ is a continuous field of unital $C^*$-algebras with fibres given by the Vaksman-Soibelman quantum spheres $C(S_q^{2\ell + 1})$ for all $q \in [\de,1]$. 
\end{cor}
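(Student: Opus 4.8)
The plan is to reduce the statement to Blanchard's continuity criterion \cite[Th\'eor\`eme 3.3]{Bla:DCH}, which promotes a unital $C([\de,1])$-$C^*$-algebra to the section algebra of a continuous field of $C^*$-algebras as soon as one exhibits a continuous field of faithful states on it. The $C([\de,1])$-algebra structure on $C(S_\bullet^{2\ell+1})$ and the identification of its fibres with the spheres $C(S_q^{2\ell+1})$ via $\T{ev}_q$ have already been set up just above, so the task is to feed the two preceding corollaries into this criterion. Recall that any unital $C(X)$-algebra is automatically an upper semicontinuous field; the content of Blanchard's theorem is that a faithful continuous field of states forces the complementary lower semicontinuity of the norm functions $q \mapsto \| \T{ev}_q(x^\bullet)\|$, which is exactly what upgrades the field to a continuous one.

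First I would assemble the required field of states. For each $q \in [\de,1]$ the composition $h \ci \T{ev}_q \colon C(S_\bullet^{2\ell+1}) \to \cc$ is a state, and since $\T{ev}_q$ identifies the fibre at $q$ with the quantum sphere $C(S_q^{2\ell+1})$, the first corollary (faithfulness of the Haar state $h$ on each $C(S_q^{2\ell+1})$) shows that $h \ci \T{ev}_q$ descends to a \emph{faithful} state on that fibre. The second corollary supplies the continuity half: for every $x^\bullet \in C(S_\bullet^{2\ell+1})$ the map $q \mapsto (h \ci \T{ev}_q)(x^\bullet)$ is continuous on $[\de,1]$. Together these two facts say precisely that $q \mapsto h \ci \T{ev}_q$ is a continuous field of faithful states on the $C([\de,1])$-algebra $C(S_\bullet^{2\ell+1})$.

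With this field in hand, the conclusion is immediate from \cite[Th\'eor\`eme 3.3]{Bla:DCH}: the norm functions become continuous, so $C(S_\bullet^{2\ell+1})$ is the continuous-section algebra of a continuous field over $[\de,1]$, and the fibre over $q$ is $C(S_q^{2\ell+1})$ as identified above. The main obstacle is not computational but consists in matching the present set-up to the precise hypotheses of Blanchard's theorem. In particular one must confirm that the algebra meets the standing assumptions of the theorem (it is separable, being finitely generated as a unital $C^*$-algebra), that evaluation at $q$ genuinely realizes the fibre as $C(S_q^{2\ell+1})$ — this rests on the universality of $C(S_\bullet^{2\ell+1})$ together with the Remark following Proposition \ref{p:injective}, which identifies $C(S_q^{2\ell+1})$ with the universal $C^*$-algebra on the relations \eqref{eq:quasph} — and that $h \ci \T{ev}_q$ really defines a faithful state on this fibre rather than merely a state on the total algebra. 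Once these identifications are in place, no further estimates are needed.
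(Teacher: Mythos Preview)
Your proposal is correct and follows precisely the paper's own approach: the paper simply states that the corollary follows by combining the two preceding corollaries (faithfulness of $h$ on each fibre and continuity of $q \mapsto (h \ci \T{ev}_q)(x^\bullet)$) with \cite[Th\'eor\`eme 3.3]{Bla:DCH}. Your write-up is in fact more detailed than the paper's, spelling out the verification that the hypotheses of Blanchard's theorem are met.
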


%\bibliography{haar}
\bibliographystyle{amsalpha-lmp}

\providecommand{\bysame}{\leavevmode\hbox to3em{\hrulefill}\thinspace}
\providecommand{\MR}{\relax\ifhmode\unskip\space\fi MR }
% \MRhref is called by the amsart/book/proc definition of \MR.
\providecommand{\MRhref}[2]{%
  \href{http://www.ams.org/mathscinet-getitem?mr=#1}{#2}
}
\providecommand{\href}[2]{#2}

\end{document}